\documentclass[oneside,a4paper,11pt,notitlepage]{article}
\usepackage[left=0.8in, right=0.8in, top=1.5in, bottom=1.5in]{geometry}

\usepackage{authblk}
\usepackage[T1]{fontenc} 
\usepackage[utf8]{inputenc} 
\usepackage[english]{babel}
\usepackage{lipsum} 
\usepackage{lmodern}
\usepackage{amssymb}
\usepackage{amsmath}
\usepackage{amsthm}
\usepackage{bm}
\usepackage{mathtools}
\usepackage{braket}
\usepackage{esint}
\newcommand{\abs}[1]{{\left|#1\right|}}

\usepackage{bigints}
\usepackage{enumitem}
\usepackage{booktabs}
\usepackage{graphicx}
\usepackage{tikz}
\usetikzlibrary{patterns}
\usepackage{multicol}
\usepackage{caption}
\usepackage{varwidth}
\usepackage{lipsum}
\usepackage{appendix}
\usepackage{tabularx}
\numberwithin{equation}{section}
\usepackage[skins,theorems]{tcolorbox}
\tcbset{highlight math style={enhanced,
		colframe=black,colback=white,arc=0pt,boxrule=1pt}}
\theoremstyle{definition}

\theoremstyle{plain}
\newtheorem{teorema}{Theorem}[section]

\newtheorem{lemma}[teorema]{Lemma}
\newtheorem{prop}[teorema]{Proposition}

\theoremstyle{definition}
\newtheorem{esempio}{Example}[section]
\newtheorem{oss}[esempio]{Remark}

\renewcommand{\det}{\text{det}}

\DeclareMathOperator{\R}{\mathbb{R}}

\newcommand{\Om}{\Omega}

\newcommand{\p}{\partial}
\newcommand{\nuout}{\nu}
\newcommand{\grad}{\nabla}
\newcommand{\Hh}{\mathcal{H}}

\newcommand{\Binom}[2]{\binom{#1}{#2}}
\DeclareMathOperator{\Div}{div}

\makeatletter

\newcommand{\myfootnote}[2]{\begingroup
	\def\@makefnmark{}%
	\addtocounter{footnote}{-1}%
	\footnote{\textbf{#1} #2}
	\endgroup}
\makeatother

\usepackage{hyperref}
\hypersetup{linktoc=none, bookmarksnumbered, colorlinks=true, linkcolor=magenta, citecolor=cyan}

\newcommand{\Addresses}{{
\bigskip 
  \footnotesize 
\noindent \textit{E-mail address}, Alba Lia~Masiello: \texttt{albalia.masiello@unina.it} 

     \medskip
\noindent \textit{E-mail address}, Gloria~Paoli (corresponding author): \texttt{gloria.paoli@unina.it} 

     \medskip

     \textsc{Dipartimento di Matematica e Applicazioni “Renato Caccioppoli”, Università degli Studi di
Napoli Federico II, Via Cintia, Monte S. Angelo, 80126 Napoli, Italy;}

\medskip
 \noindent\textit{E-mail address}, Francesco~Salerno: \texttt{f.salerno@ssmeridionale.it}

   \medskip 
 
  \textsc{Mathematical and Physical Sciences for Advanced Materials and Technologies, Scuola Superiore Meridionale, Largo San Marcellino 10, 80138 Napoli, Italy. }

 \par\nopagebreak 

}}

\title{On some functionals for which the ball is a saddle shape}
\author{Alba Lia Masiello, Gloria Paoli, Francesco Salerno}
\date{}

\begin{document}
\maketitle
\begin{abstract}
    In the present paper, we study the maximization  problem of the $k$-Torsional rigidity under quermassintegral constraint, with particular emphasis on the role of the ball. Our main result shows a phenomenon which, as far as we know, has not previously been observed: the ball, rather than being an extremal shape, is a saddle point. More precisely, for suitable quermassintegral constraints, we show that the $k$-torsional rigidity admits both increasing and decreasing perturbations around the ball. The proof is based on a second-order analysis of the corresponding variational problem. To the best of our knowledge, this is the first use of a second-order shape derivative approach in the study of variational problems for $k$-Hessian operators. 
    
    We also establish an analogous saddle-point behavior for a related curvature functional under a volume constraint, highlighting a structural distinction between the intermediate Hessian regime and the classical Laplace and Monge–Ampère cases.
    \newline
    \newline
    \textsc{Keywords:} $k$-Hessian equation, mixed volumes, shape derivative.  \\
    \textsc{MSC 2020:}   52A39, 35J60, 35J96, 49K20.
\end{abstract}

\section{Introduction}
Let $\Omega\subset\mathbb{R}^n$ be an open, \emph{convex} set  of class $C^2$. For $k=1,\dots, n$, we consider the $k$-Torsional rigidity 

    \begin{equation}
        \label{KTorsDef}
        T_k(\Omega) = \max\left\{\frac{\left(\displaystyle{\int_\Omega -v \, dx }\right)^{k+1}}{\displaystyle{\int_\Omega -v S_k(D^2v) \, dx }}, \quad  \text{ $v\in C^2(\Omega)$, $v$ $k$-convex, $v|_{\partial\Omega}=0$,}\, v\not\equiv 0 \right\}, 
    \end{equation}
   where $S_k(D^2 v)$ is the $k$-Hessian operator, defined as
\begin{equation}\label{kess}
    S_k(D^2 v)= \sum_{1\le i_1< \dots < i_k\le n} \lambda_{i_1}\cdots\lambda_{i_k}, \quad k=1,\dots, n,
\end{equation}
 $\lambda_i$ are the eigenvalues of the Hessian matrix of $v$ and a function $v\in C^2(\Omega)$ is said $k$-convex, if $S_i(D^2v)\geq 0$ for $i=1,2,...,k$. 
The supremum in \eqref{KTorsDef} is achieved by the solution to
    \begin{equation}
    \label{torsDirich}
        \begin{cases}
            S_k(D^2u)= \binom{n}{k}& \text{ in $\Omega$, }\\
            u=0 & \text{ on $\partial \Omega$},
        \end{cases} 
    \end{equation}
and consequently 
  $$T_k(\Omega)=\binom{n}{k}^{-1} \left(\int_\Omega-u\, dx\right)^{k}.$$

The $k$-Hessian operator $S_k$ is a second-order differential operator and it reduces to the Monge-Ampère operator for $k=n$ and to the Laplace operator for $k=1$, when the functional $T_1$ coincides with classical \emph{Torsional Rigidity} 

\begin{equation*}
    T(\Omega) = \sup_{v\in H^1_0(\Omega)}\frac{\displaystyle\bigg(\int_\Omega v\,dx\bigg)^2}{\displaystyle \int_\Omega\abs{\nabla v}^2\,dx}.
\end{equation*}

The famous Saint-Venant inequality asserts that among open, bounded sets in $\R^n$,  $T(\Omega)$ is maximized by any ball in $\R^n$ if the measure is prescribed, that is
\begin{equation}\label{1w0}
    T(\Omega)\le T(B), \quad \quad \text{ if } \abs{\Omega}=\abs{B}.
\end{equation}
This famous inequality is named after the mathematician who conjectured its validity (see \cite{SaintVenant1855}), but it was proved by P\'olya in 1948 \cite{Polya},  by means of symmetrization techniques. 

When $k\neq 1$, the optimization problem is more delicate, and it involves other geometrical quantities that are well defined when the set $\Omega$ is convex, the so-called 
quermassintegrals. Roughly speaking, the quermassintegrals $W_i(\Omega)$, for $i=1,\dots, n$, are intrinsic geometrical quantities that describe a convex set $\Omega$, in particular $W_0(\Omega) = \abs{\Omega}$ is the Lebesgue measure,  $nW_1(\Omega) = P(\Omega)$ is the perimeter, and $W_n(\Omega) = \omega_n$  is the measure of the Euclidean unit ball $B$.

In the case $k=n$, when the Hessian operator reduces to the Monge-Ampère operator, Talenti \cite{talenti_monge_ampere} in the planar case, 
and Tso in \cite{tso} in any dimension, prove that among convex sets of prescribed $(n-1)$-quermassintegral, the ball maximizes $T_n$. The proof of this result is obtained, once again, via a  symmetrization procedure: starting from the solution to \eqref{kess} for $k=n$, it is possible to define a function whose level sets are balls with the same $(n-1)$-quermassintegral and it is such that the quotient in \eqref{KTorsDef} increases, obtaining

\begin{equation}
    \label{nwn1}
    T_n(\Omega)\le T_n(B), \quad \text{ if } \quad W_{n-1}(\Omega)=W_{n-1}(B).
\end{equation}

Later on, in \cite{BNT_newisoperimetric}, the optimization of $T_n$ when prescribing the volume was studied. To better understand their result, it is crucial to
observe that the Monge–Ampère operator is invariant under volume preserving affine transformations. This means that if
$A$ is a volume preserving affine map, then $T_n(A\Omega)=T_n(\Omega)$, hence a
ball and  any ellipsoid having the same measure have the same $n$-Torsional rigidity. The isoperimetric problem in this case reads as follows

\begin{equation}
    \label{nw0}
    T_n(\Omega)\ge T_n(\mathcal{E}), \quad \text{ if } \quad\abs{\Omega}=\abs{\mathcal{E}},
\end{equation}

and it is proved by a symmetrization argument. In the proofs of \eqref{nwn1} and \eqref{nw0}, it is crucial that the functions involved are convex, as it is natural to study problems of the Monge-Ampère type.

The case $1<k<n$ is more delicate. In \cite{tso}, Tso’s symmetrization method
applies to functions with convex level sets: if $v$ is such a function, one can construct a rearranged function $v^*_{k-1}$, whose level sets are balls with the same $(k-1)$-th quermassintegral, and such that the quotient

$$\frac{\left(\displaystyle{\int_\Omega -v \, dx }\right)^{k+1}}{\displaystyle{\int_\Omega -v S_k(D^2v) \, dx }}\le\frac{\left(\displaystyle{\int_{\Omega_{k-1}^*} -v_{k-1}^* \, dx }\right)^{k+1}}{\displaystyle{\int_{\Omega_{k-1}^*} -v_{k-1}^* S_k(D^2v_{k-1}^*) \, dx }} $$

increases. So, \emph{if} the solution to \eqref{kess} has convex level sets, then the following is true

\begin{equation}
    \label{kwk1}
     T_k(\Omega)\le T_k(B), \quad \text{ if } \quad W_{k-1}(\Omega)=W_{k-1}(B).
\end{equation}

At present, the convexity of level sets is known only in specific
cases, $k=1$ (see \cite{Borell, HNST}), $k=2$, that was proved in \cite{Ma2008TheCO, salani_logcon} for $n=3$ and in the most recent papers \cite{salani2026Bis, salani2026} for the general dimension, and, as already said, $k=n$ (see  \cite{H}).
Thus, in the intermediate Hessian regime, the isoperimetric problem \eqref{kwk1} is known to be valid when $k=2$, while in the other cases
its formulation depends on the interaction between the analytic structure of $S_k$ and the geometry of convex bodies. Very recently in \cite{salani2026Bis}, the convexity of the level sets of the solution to \eqref{torsDirich} was disproved for $k=3$ and $n=4$.

\vspace{2mm}
The purpose of the present paper is to investigate the general variational problem

\begin{equation}
    \label{goal1}
    \max \left\{  T_k(\Omega)\,:\, \Omega \text{ is convex and }  W_j(\Omega)=C \right\}, \quad \quad 1\le k\le n,\quad 0\le j\le n-1.
\end{equation}

For $k=1$, the $n$ problems are a consequence of the Saint-Venant inequality \eqref{1w0}, the monotonicity of the Torsional rigidity with respect to domain inclusion and the Aleksandrov-Fenchel inequalities
\begin{equation}
    \label{Aleksandrov_Fenchel_inequalities}
    \biggl( \frac{W_j(\Omega)}{\omega_n} \biggr)^{\frac{1}{n-j}} \geq \biggl( \frac{W_i(\Omega)}{\omega_n} \biggr)^{\frac{1}{n-i}}, \qquad 0 \leq i < j \leq n-1,
\end{equation}
where equality holds if and only if $\Omega$ is a ball.
Indeed, if $B^0$ and $B^j$ denote the balls such that $\abs{B^0}=\abs{\Omega}$, $W_j(B^j)=W_j(\Omega)$, then \eqref{Aleksandrov_Fenchel_inequalities}  implies $B^0\subseteq B^j$, and hence
\begin{equation*}\label{SaintFenchel}
    T_1(\Omega)\le T_1(B^0)\le T_1(B^j).    
\end{equation*}
In order to have some hint of what happens in the remaining cases $k\not=1$, we adopt a shape derivative approach.
We consider a parameter $\delta>0$ and a family of deformations of the unit ball $B$ in $\R^n$ 
$\Phi\,:\,[0,\delta)\times\R^n\rightarrow\R^n$, such that
    \begin{itemize}
        \item [(i)] $\Phi(t,\cdot)$, $\Phi^{-1}(t,\cdot)\in C^\infty(\mathbb{R}^n,\mathbb{R}^n)$ for all $t\in[0,\delta)$,
        \item [(ii)] the maps $t\mapsto \Phi(t,x)$, $t\mapsto \Phi^{-1}(t,x)$ belong to $C^\infty([0,\delta))$ for all $x\in\mathbb{R}^n$,
        \item [(iii)] $\Omega(0)=B$ and $\Omega(t)=\Phi(t,B)$ for all $t\in[0,\delta)$,
        \item [(iv)] $W_j(\Omega(t))=W_j(B)=\omega_n$.
    \end{itemize}
The first Theorem that we prove is

\begin{teorema}\label{derivata1}
Let $\Phi\,:\,[0,\delta)\times\R^n\rightarrow\R^n$ be a one-parameter family of transformations that, for $\delta>0$ small enough, satisfies $(i)-(iv)$. Then, it holds

\begin{equation*}
    \frac{d T_k(\Omega(t))}{dt}\vrule_{t=0}=0.
\end{equation*}
  
\end{teorema}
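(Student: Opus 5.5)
The plan is a Hadamard-type first variation. The shape derivative of $T_k$ at the ball is a boundary integral of the normal velocity against a constant density, and the constraint (iv) forces such an integral to vanish.

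\textbf{Step 1: first variation of $T_k$.} Let $u_t$ solve \eqref{torsDirich} in $\Omega(t)$, and let $V=\partial_t\Phi(0,\cdot)$ with normal component $V\cdot\nu$ on $\partial B$. Since $T_k(\Omega)=\binom{n}{k}^{-1}\left(\int_\Omega -u\,dx\right)^k$, it suffices to differentiate $J(t)=\int_{\Omega(t)}-u_t\,dx$. I would use the variational characterization \eqref{KTorsDef}: $u_t$ maximizes the quotient, so an envelope argument lets me ignore the variation of $u_t$ in the interior and keep only the boundary contribution. Equivalently, I would compute $u'$ directly. The shape derivative $u'$ solves the linearized problem
\[
\partial_{ij}S_k(D^2u)\,\partial_{ij}u'=0 \ \text{in } B, \qquad u'=-\partial_\nu u\,(V\cdot\nu) \ \text{on } \partial B.
\]
Then $J'(0)=\int_B -u'\,dx+\int_{\partial B}(-u)\,V\cdot\nu\,d\mathcal H^{n-1}$, and the second term vanishes because $u=0$ on $\partial B$.

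To evaluate $\int_B u'$, I would test the linearized equation against $u$. The Newton tensor $S_k^{ij}=\partial S_k/\partial u_{ij}$ is divergence free, and Euler's identity gives $S_k^{ij}u_{ij}=kS_k=k\binom nk$. Integrating by parts twice then yields
\[
\int_B u'\,k\binom nk\,dx=\int_{\partial B} S_k^{ij}\,u_i\,\nu_j\, u'\,d\mathcal H^{n-1}-\int_{\partial B} S_k^{ij}\,u'_i\,\nu_j\,u\,d\mathcal H^{n-1}.
\]
The last integral is zero since $u=0$ on $\partial B$.

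\textbf{Step 2: specialize to the ball.} On $B$ the solution is explicit, $u=\frac{|x|^2-1}{2}$, so $D^2u=I$ and $\partial_\nu u=1$ on $\partial B$. Moreover $S_k^{ij}=\binom{n-1}{k-1}\delta_{ij}$. Substituting, $\int_B u'$ becomes a constant multiple of $\int_{\partial B} u'=-\int_{\partial B}V\cdot\nu$. Hence
\[
\frac{d}{dt}T_k(\Omega(t))\Big|_{t=0}=c_{n,k}\int_{\partial B}V\cdot\nu\,d\mathcal H^{n-1}
\]
for an explicit constant $c_{n,k}$.

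\textbf{Step 3: use the constraint.} The first variation of the quermassintegral is the classical formula
\[
\frac{d}{dt}W_j(\Omega(t))=\frac{n-j}{n}\int_{\partial\Omega}H_j\,V\cdot\nu\,d\mathcal H^{n-1},
\]
where $H_j$ is the normalized $j$-th mean curvature. On the unit sphere $H_j\equiv1$. Condition (iv) therefore gives $\int_{\partial B}V\cdot\nu=0$ for every $j$, and Step 2 then gives the claim.

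\textbf{Main obstacle.} The delicate step is justifying the differentiability of $t\mapsto u_t$, that is, the existence of $u'$ and the linearized problem, for the fully nonlinear $k$-Hessian equation. I would handle it by pulling back to $B$ via $\Phi$ and applying the implicit function theorem in $C^{2,\alpha}$. The key input is that the linearized operator $S_k^{ij}\partial_{ij}$ at $u$ is uniformly elliptic, which holds because $u$ is the $k$-admissible solution; at the ball it is simply a multiple of the Laplacian. Once this is established, Steps 1–3 are direct computations.
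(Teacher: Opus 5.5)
Your proof is correct and follows essentially the paper's route: Hadamard's formula reduces the derivative to $\int -u'$, and testing against the divergence-form equation with two integrations by parts leaves a boundary term which, on the ball, is a constant multiple of $\int_{\partial B}V\cdot\nu$; this vanishes by the constraint. The differences are minor. You evaluate at the ball directly instead of first rewriting $T_k'(t)$ via \eqref{hksk} as $\int_{\partial\Omega(t)} H_{k-1}|\nabla u|^{k+1}V$. You get $\int_{\partial B}V\cdot\nu=0$ from the classical first-variation formula for $W_j$ rather than from the expansion in Theorem \ref{exp-querm}. You also address the differentiability of $t\mapsto u$ via the implicit function theorem, which the paper takes for granted.
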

This first result ensures that the ball is a critical point for the $k$-Torsion when prescribing \emph{any} quermassintegral for $j=0,\dots, n-1$. To understand the nature of this critical point, we restrict our analysis on the class of ellipsoids, and we prove

\begin{teorema}\label{teo:ellissi}
    Among all the ellipsoids of given quermassintegral $W_j$, $j=0,\dots, n-1$, the ball maximizes the $k$-Torsional rigidity for $1\le k <n$. 
\end{teorema}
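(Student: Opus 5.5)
The plan is to compute $T_k$ explicitly on ellipsoids, compare an ellipsoid with the ball of the same volume using Maclaurin's inequality, and then pass from the volume constraint to the $W_j$ constraint through the Aleksandrov--Fenchel inequalities \eqref{Aleksandrov_Fenchel_inequalities}. This mirrors the argument given above for $k=1$.

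\emph{Step 1: explicit solution on an ellipsoid.} Let $E=\{x:\sum_i x_i^2/a_i^2<1\}$ and set $b_i=a_i^2$. I look for a solution of \eqref{torsDirich} of the form $u=c\,(\sum_i x_i^2/b_i-1)$ with $c>0$. Then $D^2u=\mathrm{diag}(2c/b_i)$, so
$$S_k(D^2u)=(2c)^k e_k(b^{-1}),$$
where $e_k$ is the $k$-th elementary symmetric polynomial and $b^{-1}=(1/b_1,\dots,1/b_n)$. The equation therefore fixes $(2c)^k=\binom{n}{k}/e_k(b^{-1})$. Since every eigenvalue of $D^2u$ is positive, $u$ is convex and in particular $k$-convex, and it vanishes on $\partial E$. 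I will take for granted that this is the (unique $k$-convex) solution, so it realizes the maximum in \eqref{KTorsDef}.

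Changing variables $x_i=a_iy_i$ gives
$$\int_E -u\,dx=c\,\Big(\prod_i a_i\Big)\int_B(1-|y|^2)\,dy=\frac{2\omega_n}{n+2}\,c\prod_i a_i .$$
Using $e_k(b^{-1})=e_{n-k}(b)/\prod_i b_i$, this yields
$$T_k(E)=C_{n,k}\,\frac{\big(\prod_i b_i\big)^{\frac{k}{2}+1}}{e_{n-k}(b)}$$
for an explicit constant $C_{n,k}>0$.

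\emph{Step 2: comparison at fixed volume.} The volume $|E|=\omega_n\prod_i a_i$ fixes $P=\prod_i b_i$. Maclaurin's inequality gives
$$e_{n-k}(b)\ge\binom{n}{k}P^{(n-k)/n}.$$
For $k<n$, i.e. $n-k\ge1$, equality holds if and only if all the $b_i$ coincide. Hence $T_k(E)\le T_k(B^0)$, where $B^0$ is the ball with $|B^0|=|E|$, with equality only when $E$ is a ball. For $k=n$ we have $e_0=1$, and all ellipsoids of equal volume give the same value, consistent with the affine invariance discussed around \eqref{nw0}. This is why the statement restricts to $k<n$.

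\emph{Step 3: from volume to $W_j$.} Let $B^j$ be the ball with $W_j(B^j)=W_j(E)$. By \eqref{Aleksandrov_Fenchel_inequalities} with $i=0$, the radius of $B^0$ is at most that of $B^j$. The formula of Step 1 for $b_i=r^2$ shows that $T_k(rB)$ is a positive multiple of a positive power of $r$, namely $r^{k(n+2)+2-2(n-k)}$ up to constants, which one checks is increasing. Equivalently, $T_k$ is monotone under inclusion on balls because the quadratic solution scales. So $T_k(E)\le T_k(B^0)\le T_k(B^j)$, which proves the theorem, with strictness for non-spherical $E$.

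\emph{Expected obstacle.} The step needing most care is Step 1. One must justify that the quadratic function is the maximizer in \eqref{KTorsDef}, which relies on uniqueness for the Dirichlet problem \eqref{torsDirich} among $k$-convex functions. One must also keep the exponent bookkeeping correct, in particular the identity $e_k(b^{-1})=e_{n-k}(b)/\prod_i b_i$ and the positivity of the resulting power of $r$. After that, Maclaurin and Aleksandrov--Fenchel close the argument directly.
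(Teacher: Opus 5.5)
Your proof is correct and follows the paper's route. The paper also uses the explicit quadratic solution on the ellipsoid, then the Newton--Maclaurin inequality at fixed volume, then Aleksandrov--Fenchel plus monotonicity on balls. Your bound $e_{n-k}(b)\ge\binom{n}{k}P^{(n-k)/n}$ is the paper's $S_k(A^2)^{1/k}\ge\binom{n}{k}^{1/k}\det(A^2)^{1/n}$, rewritten through $e_k(b^{-1})=e_{n-k}(b)/\prod_i b_i$.

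One arithmetic slip: with $b_i=r^2$ your formula gives $T_k(rB)\propto r^{n(k+2)-2(n-k)}=r^{k(n+2)}$, not $r^{k(n+2)+2-2(n-k)}$. The latter is even negative for $k=1$, $n=7$. Your scaling remark, or Proposition~\ref{prop_mon}, still gives the monotonicity you need.
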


The proof of Theorem \ref{teo:ellissi} proceeds by establishing the corresponding isoperimetric property of the
ball among ellipsoids with fixed volume; the conclusion for arbitrary quermassintegral
constraints then follows from the Aleksandrov–Fenchel inequalities.

A natural route for extending this result from ellipsoids to general convex bodies leads to the curvature functional

\begin{equation}\label{kaffine}
    \min_{\abs{\Omega}=C} \int_{\partial\Omega} H_{k-1}^\frac{1}{k+1}\,d\Hh^{n-1}, \quad \quad 1\le k<n,
\end{equation}
where $H_{k-1}$ denotes the normalized $(k-1)$-th mean curvature, i.e.,

 \[
 H_0 = 1, \qquad \qquad H_j = \binom{n-1}{j}^{-1} \sum_{1 \leq i_1 < \ldots < i_j \leq n-1} \kappa_{i_1} \ldots \kappa_{i_j}, \qquad j = 1,\ldots,n-1,
 \]
where  $\kappa_1, \ldots, \kappa_{n-1}$ are the principal curvatures of $\partial \Omega$.

For $k=1$, this problem reduces to the classical isoperimetric inequality, while for $k=n$ the maximization problem is well posed and it is known as the \emph{Petty inequality} (also referred to as affine surface area inequality \cite{Petty}) that asserts
\begin{equation}\label{AffIsopIneq}
    \int_{\partial\Omega} H_{n-1}^\frac{1}{n+1}\,d\Hh^{n-1}\le n \omega_n^\frac{2}{n+1} \abs{\Omega}^\frac{n-1}{n+1},   
\end{equation}
with equality if and only if $\Om$ is an ellipsoid. In both cases, the isoperimetric and the Petty inequalities are key tools to prove the optimization problem of the corresponding Torsional rigidity, that is why problem \eqref{kaffine} comes into play. Indeed, in \cite{BNT2}, the authors give an alternative proof of \eqref{nw0} combining a shape derivative argument with the affine curvature flow and \eqref{AffIsopIneq}. The intermediate regime, however, exhibits a different variational behavior, as stated in the following result.
\begin{teorema}\label{teo_curvature_sella}
    Let $2\le k<n$. Then, the ball is a saddle shape for the  functional

    $$\int_{\partial\Omega} H_{k-1}^\frac{1}{k+1}\,d\Hh^{n-1}$$
    when the volume is prescribed. 
\end{teorema}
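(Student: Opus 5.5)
The plan is to compute the second variation of $F(\Omega)=\int_{\partial\Omega}H_{k-1}^{1/(k+1)}\,d\Hh^{n-1}$ at the unit ball $B$ along volume-preserving deformations. I will then exhibit one admissible direction in which the second variation is strictly positive and another in which it is strictly negative. That $B$ is a critical point is expected to follow from rotational symmetry together with the Lagrange multiplier for the volume, as in Theorem \ref{derivata1}.

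\textbf{Setup via the support function.} I parametrize convex bodies near $B$ by their support function $h=1+t\varphi+\frac{t^2}{2}\psi$ on $\mathbb{S}^{n-1}$, where $\psi$ is chosen so that $|\Omega(t)|=\omega_n$ up to order $t^2$. Let $A[h]=\nabla^2_{\mathbb{S}}h+h\,I$; its eigenvalues are the principal radii of curvature. Then $H_{k-1}=\binom{n-1}{k-1}^{-1}\sigma_{n-k}(A)/\sigma_{n-1}(A)$ and $d\Hh^{n-1}=\sigma_{n-1}(A)\,d\sigma$. Hence
\[
F=c_{n,k}\int_{\mathbb{S}^{n-1}}\sigma_{n-k}(A)^{\frac{1}{k+1}}\,\sigma_{n-1}(A)^{\frac{k}{k+1}}\,d\sigma ,\qquad |\Omega|=\frac1n\int_{\mathbb{S}^{n-1}}h\,\sigma_{n-1}(A)\,d\sigma .
\]
I will study the scale-invariant quotient $G=F/|\Omega|^{\gamma}$ with $\gamma=\frac{n-1-\frac{k-1}{k+1}}{n}$. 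This removes the need to track $\psi$ and the Lagrange multiplier explicitly.

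\textbf{Second-order expansion.} Write $A=I+tB+O(t^2)$ with $B=\nabla^2\varphi+\varphi I$. I expand $\sigma_m(I+tB)=\binom{n-1}{m}+t\binom{n-2}{m-1}\mathrm{tr}B+t^2\binom{n-3}{m-2}\sigma_2(B)+\dots$ and then expand the product of powers. The resulting $t^2$ coefficient contains two kinds of terms.
\begin{itemize}
\item A \emph{linear combination} of $\int\sigma_2(B)$ and $\int(\mathrm{tr}B)^2$.
\item A \emph{concavity term} $-\frac{\alpha(1-\alpha)}{2}\int\bigl(\tfrac{n-k}{n-1}-1\bigr)^2(\mathrm{tr}B)^2$, with $\alpha=\frac1{k+1}$. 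It comes from the geometric-mean structure $\sigma_{n-k}^{\alpha}\sigma_{n-1}^{1-\alpha}$ and vanishes exactly when $k=1$.
\end{itemize}

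\textbf{Diagonalization.} I decompose $\varphi=\sum_{\ell\ge2}\varphi_\ell$ in spherical harmonics; the modes $\ell=0,1$ correspond to dilations and translations and are discarded. Using $\Delta\varphi_\ell=-\ell(\ell+n-2)\varphi_\ell$ and the Bochner identity $\int|\nabla^2\varphi|^2=\int(\Delta\varphi)^2-(n-2)\int|\nabla\varphi|^2$, I compute $\int\sigma_2(B)$ and $\int(\mathrm{tr}B)^2$ mode by mode. The second variation of $G$ then becomes
\[
\sum_{\ell\ge2}q_{n,k}(\ell)\,\|\varphi_\ell\|_{L^2}^2,
\]
with $q_{n,k}$ an explicit polynomial in $\lambda_\ell=\ell(\ell+n-2)$.

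\textbf{Sign analysis.} For large $\ell$, $\int(\mathrm{tr}B)^2\sim\lambda_\ell^2\|\varphi_\ell\|^2$, while $\int\sigma_2(B)$ and the volume contribution are only $O(\lambda_\ell)$. The concavity term therefore dominates, and it is strictly negative precisely when $2\le k<n$; so $q_{n,k}(\ell)<0$ for $\ell$ large. For $\ell=2$, the directions generated by ellipsoids, I must show $q_{n,k}(2)>0$. As consistency checks, this value should reduce to $0$ when $k=n$, reflecting affine invariance (the Petty case \eqref{AffIsopIneq}), and it interpolates the isoperimetric case $k=1$.

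\textbf{Main obstacle.} The delicate step is the exact evaluation and sign of $q_{n,k}(2)$. At $\ell=2$ all contributions are of comparable size, so the binomial coefficients and the volume correction must be tracked exactly. I would first try to cross-check this value against a direct computation on a one-parameter family of ellipsoids of fixed volume, where the principal radii are explicit.
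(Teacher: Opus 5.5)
Your route is sound, and it differs from the paper's mainly in the parametrization. The paper writes $\partial\Omega(t)$ radially as $r=1+tV+\frac{t^2}{2}A$ and expands $H_{k-1}$ with the polar-coordinate formula of Lemma \ref{exp-curv}. It enforces the volume constraint through \eqref{eq:I}--\eqref{eq:II}, which involve the normal acceleration $A$, and obtains an explicit quartic $f_{n,k,0}(\ell)$. Its negative leading term comes from $-(\Delta_\xi V)^2$, which is exactly your concavity term. Your support-function formulation makes $H_{k-1}$ and the area element algebraic in the eigenvalues of $\nabla^2h+hI$, with no gradient terms. The scale-invariant quotient $G$ legitimately removes $\psi$ and the multiplier, since $G'(B)=0$. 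Your large-$\ell$ argument is correct. One small inaccuracy: the concavity term is strictly negative for every $k\ge2$, including $k=n$; what distinguishes $k<n$ is the sign at $\ell=2$.

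The gap is the other half of the saddle. You never establish $q_{n,k}(2)>0$; you only flag it as the main obstacle. As written, the proposal therefore shows only that the ball is not a local minimizer under the volume constraint. This is a finite computation, and it closes cleanly in your own framework without ellipsoids.

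Take a degree-$\ell$ harmonic $\varphi$ with $\|\varphi\|_{L^2}=1$ and set $x=(n-1)-\ell(\ell+n-2)$. Then:
\begin{itemize}
\item $\mathrm{tr}B=x\varphi$, so $\int(\mathrm{tr}B)^2=x^2$ and $\int\varphi\,\mathrm{tr}B=x$;
\item your Bochner identity gives $\int\sigma_2(B)=\frac{n-2}{2}x$;
\item hence $|\Omega(t)|=\omega_n+\frac{t^2}{2}x+O(t^3)$.
\end{itemize}
In your expansion, the $t^2$-coefficient of the normalized integrand is $c_S\,\sigma_2(B)-\frac{k}{2(k+1)^2}\frac{(k-1)^2}{(n-1)^2}(\mathrm{tr}B)^2$, where $c_S=\frac{(n-k)(n-k-1)}{(k+1)(n-1)(n-2)}+\frac{k}{k+1}$. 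The first-order coefficient is $\beta=\frac{nk+n-2k}{(k+1)(n-1)}$, and your exponent satisfies $n\gamma=(n-1)\beta$. One also has $(n-2)c_S-(n-1)\beta=-\frac{k(2n-k-1)}{(k+1)(n-1)}$.

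Putting these together, the second derivative of $F$ along a volume-preserving deformation with velocity $\varphi$ is
\[
-\frac{k\,x}{(k+1)^2(n-1)^2}\Bigl[(2n-k-1)(k+1)(n-1)+(k-1)^2x\Bigr].
\]
This coincides with the paper's $f_{n,k,0}(\ell)$ once $\ell(\ell+n-2)$ is expanded.

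\begin{itemize}
\item Since $x<0$ for $\ell\ge2$, the sign is that of the bracket.
\item At $\ell=2$ one has $x=-(n+1)$, and the bracket equals $2(n-k)(nk+n-2)$. This is positive for $k<n$ and vanishes for $k=n$, as your affine-invariance check predicts.
\item For $k\ge2$ the bracket tends to $-\infty$ as $\ell\to\infty$.
\end{itemize}

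With this computation written out, your proof is complete. The paper reaches the same conclusion by evaluating its explicit polynomial at $\ell=2$.
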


This theorem suggests that the same happens for the $k$-Torsional rigidity when prescribing any quermassintegral. Indeed, we prove the following

\begin{teorema}\label{teo_sella}
    Let $0\le j<k-1$ and $1\leq k<n$. Then, the ball is a saddle shape for the $k$-Torsional rigidity among the convex sets with prescribed $j$-th quermassintegral. Moreover, if $k=n$ and $0<j<n-1$, then the ball is a saddle shape for the $n$-Torsional rigidity among the convex sets with prescribed $j$-th quermassintegral.
\end{teorema}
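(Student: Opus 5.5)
We plan to compute the second shape derivative of $T_k$ at the ball along volume-type normal perturbations, corrected so that the constraint $W_j(\Omega(t))=\omega_n$ holds. We then exhibit two families of perturbations, one along which $T_k$ increases and one along which it decreases. Since the ball is a critical point by Theorem \ref{derivata1}, the sign of $\frac{d^2}{dt^2}T_k(\Omega(t))|_{t=0}$ decides the local behaviour.

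\textbf{Parametrization.} We write $\partial\Omega(t)=\{(1+t\varphi(x)+\tfrac{t^2}{2}\psi(x)+o(t^2))x : x\in\partial B\}$ with $\varphi\in C^\infty(\mathbb S^{n-1})$. We expand $\varphi=\sum_\ell \varphi_\ell$ in spherical harmonics of degree $\ell$, and choose $\psi$ so that $W_j$ is preserved to second order. The second variation of $W_j$ at the ball is classical, obtained from the Steiner formula or from the mixed-volume expansion via the support function. It is a quadratic form diagonal in the $\varphi_\ell$, with coefficient proportional to $(n-j)\big[(n-j-1)-\ell(\ell+n-2)/(n-1)\cdot c_j\big]$ in the appropriate normalization.

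\textbf{Second variation of $T_k$.} We linearize \eqref{torsDirich}: $u(t)=u_0+tu'+\tfrac{t^2}{2}u''$ with $u_0=(|x|^2-1)/2$. The linearized operator at $u_0$ is $\sum_{ij}S_k^{ij}(D^2u_0)\partial_{ij}=\binom{n-1}{k-1}\Delta$, because $D^2u_0=I$. Hence $u'$ is harmonic with $u'=-\partial_\nu u_0\,\varphi=-\varphi$ on $\partial B$, so $u'=-\sum_\ell r^\ell\varphi_\ell$. The second-order term involves $S_k$'s second derivative $\partial^2 S_k/\partial a_{ij}\partial a_{lm}$ evaluated at $I$. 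This gives $\binom{n-2}{k-2}\big[(\Delta u')^2-|D^2u'|^2\big]=-\binom{n-2}{k-2}|D^2u'|^2$, where $k$ enters explicitly. With the Hadamard-type formula for $\frac{d^2}{dt^2}\int_{\Omega(t)}u$, and integration by parts against $u_0$, we obtain the diagonal form
\[
\frac{d^2}{dt^2}T_k(\Omega(t))\Big|_{t=0}=\sum_{\ell\ge 2} Q_{k,j}(\ell)\,\|\varphi_\ell\|^2_{L^2(\mathbb S^{n-1})}.
\]
Here $\ell=0$ is excluded by the constraint and $\ell=1$ (translations) gives zero. $Q_{k,j}(\ell)$ is an explicit rational function of $\ell$, built from the Laplace part, the $\binom{n-2}{k-2}|D^2 u'|^2$ term and the $W_j$ correction. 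We compute $\int_B|D^2(r^\ell\varphi_\ell)|^2=\ell(\ell-1)\|\varphi_\ell\|^2$ after integrating by parts.

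\textbf{Saddle.} It then suffices to show that $Q_{k,j}$ changes sign on $\{\ell\ge2\}$. We expect $Q_{k,j}(2)<0$, consistent with Theorem \ref{teo:ellissi}, since degree-2 harmonics are exactly the infinitesimal ellipsoid deformations. For $\ell\to\infty$, the leading coefficient should be positive precisely when $j<k-1$. The $W_j$-constraint correction grows like $\ell^2$ with a weight depending on $j$, while the Hessian term grows like $\ell^2$ with weight depending on $k$, and their competition yields the threshold $j<k-1$. For $k=n$, the affine invariance makes $Q_{n,0}\equiv0$ on $\ell=2$, so we must check separately that $Q_{n,j}(2)<0$ for $j>0$ and that the sign at large $\ell$ is positive when $j<n-1$.

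\textbf{Main obstacle.} The hardest step is the exact computation of $Q_{k,j}(\ell)$, in particular the second-order term of the nonlinear $k$-Hessian and the careful bookkeeping of $\psi$. An error in a constant would shift the threshold. As a check, we would verify that for $k=1$ the form is negative for all $\ell\ge2$, recovering Saint-Venant, and that for $k=n$, $j=0$ it vanishes at $\ell=2$.
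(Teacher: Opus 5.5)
\textbf{Same route as the paper, but the proof is not yet there.} You linearize at $D^2u_0=I$, so that $u_t=-\sum_\ell a_\ell r^\ell Y_\ell$ is harmonic. You integrate the interior second-order term $-\binom{n-2}{k-2}|D^2u_t|^2$ against $u_0$, and the constraint enters only through $\int_{\partial B}A$, so $\psi$ matters only through its mean. You then read off the sign of a multiplier in $\ell$. This is the paper's argument. The gap is that the theorem's whole content is that multiplier and its sign change, which you only \emph{expect}. Moreover, two ingredients you do state would give the wrong answer if used literally.

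\textbf{First ingredient: the Hessian integral needs its weight.} In fact $\int_B|D^2(r^\ell Y_\ell)|^2=\ell(\ell-1)(2\ell+n-2)$ (use $\Delta|\nabla w|^2=2|D^2w|^2$ and homogeneity), not $\ell(\ell-1)$. What enters is the weighted integral $\int_B\frac{1-|x|^2}{2}|D^2(r^\ell Y_\ell)|^2=\ell(\ell-1)$. Its prefactor $\frac{k-1}{n(n-1)}$ comes from $\binom{n-1}{k-1}\Delta u_{tt}=\binom{n-2}{k-2}|D^2u_t|^2$ and $\int_B-u_{tt}=\frac1n\int_B-u_{tt}\Delta u_0$. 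Unweighted, the Hessian term would be cubic in $\ell$ and positive at large $\ell$ for every $k\ge2$ and every $j$. That contradicts e.g.\ Tso's theorem for $k=n$, $j=n-1$. Your $k=n$, $j=0$ check would catch it, since $\ell=2$ would no longer give zero.

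\textbf{Second ingredient: the $W_j$ form is in the wrong normalization.} The form you quote, with a minus sign in front of $\ell(\ell+n-2)$, is the support-function normalization, where the bracket is $(n-j-1)\big[1-\frac{\ell(\ell+n-2)}{n-1}\big]$. You parametrize by the radial function, and so does the boundary relation $-u_{tt}=V^2+A+2V\partial_\nu u_t$ on $\partial B$. Since $h=r+\tfrac12|\nabla_\xi r|^2+O(t^3)$, in your normalization the constraint reads
\[
\int_{\partial B}A\,d\mathcal{H}^{n-1}=-\sum_{\ell}a_\ell^2\Big[(n-j-1)+\frac{j}{n-1}\,\ell(\ell+n-2)\Big],
\]
with a plus sign and weight $j$. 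This $j$ is exactly what competes with the Hessian weight $k-1$. Inserting the support-function form into the radial computation instead gives leading coefficient $\frac{(n-j-1)+(k-1)}{n(n-1)}>0$ for all $j$. There is then no threshold, and already $k=1$, $j<n-1$ fails.

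\textbf{What the corrected computation gives.} With both corrections, $J=\int_{\Omega(t)}-u$ satisfies
\[
J''(0)=\int_{\partial B}V^2+\frac1n\int_{\partial B}\big(V^2+A+2V\partial_\nu u_t\big)+\frac{k-1}{n(n-1)}\int_B(-u_0)|D^2u_t|^2,
\]
and the multiplier is
\[
f_{n,k,j}(\ell)=\frac{(\ell-1)\big[(k-1-j)\ell-(j+2)(n-1)\big]}{n(n-1)},
\]
which is the paper's polynomial in factored form. Thus $f(2)=\frac{2(k-n)-j(n+1)}{n(n-1)}<0$ unless $k=n$, $j=0$. When $j<k-1$, $f(\ell)>0$ for $\ell>\frac{(j+2)(n-1)}{k-1-j}$. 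So your predicted signs are the correct ones and agree with Theorem~\ref{teo:ellissi}. The paper's verbal summary states them the other way round, which does not affect the saddle conclusion.
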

To prove Theorems \ref{teo_curvature_sella}-\ref{teo_sella}, we compute the second derivative

$$ \frac{d^2}{dt^2}\int_{\partial\Omega(t)} H_{k-1}^\frac{1}{k+1}\vrule_{t=0}\,d\Hh^{n-1}, \quad \frac{d^2 T_k(\Omega(t))}{dt^2}\vrule_{t=0}$$
and we show that there exist deformations of the unit ball along which the second derivative is positive and other deformations along which is negative. To summarize, the optimization results for the $k$-Torsional rigidity with prescribed quermassintegral are collected in Table \ref{Table Tk}.

\vspace{2mm}

Even if the study of optimization problems for the eigenvalue and the Torsional rigidity of the $k$-Hessian operator can be found in several papers (see, for instance, \cite{BT2}, \cite{DPG_stab_eig}, \cite{DPG_skautovalori}, \cite{Nunziask}, \cite{ms1}, \cite{ms2}) none of this works is based on a shape derivative approach. The first order shape derivative of the $n$-Torsional rigidity was computed in \cite{BGNT_ellipsoids}
and it is linked to the study of overdetermined problems for the Monge-Ampère equation, also studied in \cite{BNST_alternative,enache,GMPP}, but, to the best of our knowledge, this is the first use of a second-order shape derivative
approach in the study of variational problems for $k$-Hessian operators.

\begin{table}[ht]
\centering
\renewcommand{\arraystretch}{1.8}
\begin{tabular}{|c|c|c|c|}
\hline
&
&
$\displaystyle \inf_{\substack{W_j(\Omega)=c}} T_k(\Omega)$ &
$\displaystyle \sup_{\substack{W_j(\Omega)=c}} T_k(\Omega)$ \\
\hline
$k=1$
&
$0\le j\le n-1$
&
$0$ [Remark \ref{min:prob}]
&
$B$ is the unique maximizer \cite{Polya, tso}
\\
\hline
$1<k<n$
&
$0\le j<k-1$
&
$0$ [Remark \ref{min:prob}]
&
\parbox[c]{6cm}{\centering
$<\infty$, but $B$ is a saddle shape\par
[Theorem \ref{teo_sella}]
}
\\
\cline{2-4}
&
$k-1\le j\le n-1$
&
$0$ [Remark \ref{min:prob}]
&
$B$ is the unique maximizer \cite{tso}
\\
\hline
$k=n$
&
$j=0$
&
$\mathcal{E}$ is the unique minimizer \cite{BNT_newisoperimetric}
&
$\Delta_n$ [conjectured \cite{NamQLe}]
\\
\cline{2-4}
&
$0<j<n-1$
&
$0$ [Remark \ref{min:prob}]
&
\parbox[c]{6cm}{\centering
$<\infty$, but $B$ is a saddle shape\par
[Theorem \ref{teo_sella}]
}
\\
\cline{2-4}
&
$j=n-1$
&
$0$ [Remark \ref{min:prob}]
&
$B$ is the unique maximizer \cite{tso}
\\
\hline
\end{tabular}
\vspace{0.2cm}
\caption{Known and conjectured extremal properties of $T_k$ under quermassintegral constraints.}
\label{Table Tk}
\end{table}

\vspace{3mm}

The paper is organized as follows. In Section \ref{sec:preliminaries}, we recall some basic properties of elementary symmetric functions, $k$-Hessian operator and quermassintegrals, while in Section \ref{sec:ellisse} we prove Theorem \ref{teo:ellissi}. In Section \ref{sec:shape} we write the Taylor expansion of the geometric functionals and we prove Theorem \ref{teo_curvature_sella}, while in Section \ref{sec:prove} we compute the first and second shape derivatives of the $k$-Torsional rigidity, and we prove Theorem \ref{teo_sella}.

\section{Preliminaries}\label{sec:preliminaries}

   

\subsection{Elementary symmetric functions and Newton tensors}

For $\lambda=(\lambda_1,\dots,\lambda_n)\in\R^n$, let
\[
 S_k(\lambda)=\sum_{1\le i_1<\cdots<i_k\le n}
 \lambda_{i_1}\cdots\lambda_{i_k},
 \qquad S_0(\lambda)=1.
\]
If $A$ is a symmetric matrix with eigenvalues $\lambda(A)$, we write $S_k(A)=S_k(\lambda(A))$. 

We recall that the \emph{generalized Kronecker delta} is defined as
\begin{equation*}
    \delta_{j_1\dots j_k}^{i_1\dots i_k}=\begin{cases}
        1 & \text{ if $i_1,\dots,i_k$ are distinct integers and are an even permutation of $j_1,\dots,j_k$},\\
        -1 & \text{ if $i_1,\dots,i_k$ are distinct integers and are an odd permutation of $j_1,\dots,j_k$},\\
        0 & \text{ in all other cases}.
    \end{cases}
\end{equation*}
The latter can be also seen in terms of an $n\times n$ determinant as follows
\begin{equation*}
    \delta_{j_1\dots j_k}^{i_1\dots i_k}=\begin{vmatrix}
            \delta_{j_1}^{i_1} & \delta_{j_2}^{i_1} & \dots & \delta_{j_k}^{i_1}\\
            \delta_{j_1}^{i_2} & \delta_{j_2}^{i_2} & \dots & \delta_{j_k}^{i_2}\\
            \vdots & \vdots & \ddots & \vdots\\
            \delta_{j_1}^{i_k} & \delta_{j_2}^{i_k} & \dots & \delta_{j_k}^{i_k}
\end{vmatrix}.
\end{equation*}
A contraction property that will be  useful is the following: let $0<p<k$ be two integers, then
\begin{equation}\label{contraction}
    \delta_{j_1\dots j_p}^{i_1\dots i_p}\delta_{j_1\dots j_k}^{i_1\dots i_k}=\frac{p!(n-k+p)!}{(n-k)!}\delta_{j_{p+1}\dots j_k}^{i_{p+1}\dots i_k}.
\end{equation}
In particular, if $p=1$, formula \eqref{contraction} reduces to
\begin{equation}\label{contr2}
    \delta_{j_l}^{i_l}\delta_{j_1\dots j_k}^{i_1\dots i_k}=(n-k+1)\delta_{j_1\dots j_{l-1}j_{l+1}\dots j_k}^{i_1\dots i_{l-1}i_{l+1}\dots i_k},
\end{equation}
where $l\in\{1,\dots,k\}.$

The Newton tensor is
\begin{equation*}
 [T_k(A)]_i^{\ j}
 =\frac1{k!}\delta_{ii_1\dots i_k}^{jj_1\dots j_k}
 A_{j_1}^{i_1}\cdots A_{j_k}^{i_k},
\end{equation*}
where  $A$ is a $n\times n$ matrix. Using the generalized Kronecker symbol, we can rewrite the $k$-Hessian operator as follows
\begin{equation}\label{eq:Sk-Kronecker}
 S_k(A)=\frac1{k!}\delta_{i_1\dots i_k}^{j_1\dots j_k}
 A_{j_1}^{i_1}\cdots A_{j_k}^{i_k}.
\end{equation}
Differentiating \eqref{eq:Sk-Kronecker}, we have
\begin{equation}\label{cofproof}
        S_k^{ij}(A)=\frac{\partial}{\partial A_j^i}\frac{1}{k!}\delta_{ii_1\dots i_{k-1}}^{jj_1\dots j_{k-1}}A_j^iA_{j_1}^{i_1}\dots A_{j_{k-1}}^{i_{k-1}}=\frac{1}{(k-1)!}\delta_{ii_1\dots i_{k-1}}^{jj_1\dots j_{k-1}}A_{j_1}^{i_1}\dots A_{j_{k-1}}^{i_{k-1}}=[T_{k-1}]_i^j(A).
\end{equation}
For a positive definite matrix $A$, Newton's inequalities read
\begin{equation}\label{eq:Newton-inequalities}
 \left(\frac{S_\ell(A)}{\binom n\ell}\right)^{1/\ell}
 \ge
 \left(\frac{S_k(A)}{\binom nk}\right)^{1/k},
 \qquad 1\le\ell<k\le n.
\end{equation}
The equality case, when $1\le\ell<k$, is $A=\lambda I$.
  \subsection{The \texorpdfstring{$k$}{k}-Hessian operator}
Let $\Omega$ be an open subset of $\R^n$ and let $u\in C^2(\Omega)$. The $k$-Hessian operator is the $k$-th elementary symmetric function of the Hessian matrix $D^2u$. For $k=1$, the $k$-Hessian operator reduces to the Laplace operator; for $k>1$, the $k$-Hessian operator is fully nonlinear and it is non-elliptic, unless one restricts to the class  of $k$-convex functions

$${\Gamma}_k(\Omega)= \left\{u\in C^2(\Omega) : S_i (D^2u)\ge 0 \, \text{ in } \, \Omega, \,i=1,\dots, k\right\}.$$
In \cite{Caffarelli_existence}, the existence of solutions to \eqref{kess} is investigated, establishing that the classical Dirichlet problem is solvable 
if and only if $\partial\Omega$ satisfies
$S_{k-1}(\kappa_1,\dots, \kappa_{n-1}) > c_0 > 0$
where $\kappa_j$ denotes the principal curvatures of $\partial\Omega$, oriented so that convex domains have
nonnegative curvatures.

We observe that the operator $S_k^{\frac{1}{k}}$ is homogeneous of degree 1, and if we denote by

$$S_k^{ij}(D^2u)=\frac{\partial}{\partial u_{ij}}S_k(D^2u),$$
the Euler identity for homogeneous functions gives
\begin{equation*}
    S_k(D^2u)=\frac{1}{k} S_k^{ij}(D^2u)u_{ij}.
\end{equation*}
A direct computation shows that $\left(S_k^{1j}(D^2u), \dots, S_k^{nj}(D^2u)\right)$ is divergence-free, hence $S_k(D^2u)$ can be written in divergence form

\begin{equation*}
    S_k(D^2u)=\frac{1}{k}\left(S_k^{ij}(D^2u) u_j\right)_i,
\end{equation*}
where the subscripts $i, j$ stand for partial differentiation. 

\subsection{Quermassintegrals and Mean Curvatures}

Let $\Omega\subset\mathbb{R}^n$ be an open, bounded set with $C^{2}$ boundary.
The $k$-th normalized mean curvature is defined as
\begin{equation*}
    H_{k}=\dfrac{S_{k}(\kappa_1, \dots, \kappa_{n-1})}{\binom{n-1}{k}},
\end{equation*}
where $\kappa_1, \cdots,\kappa_{n-1}$ are the principal curvatures of $\partial\Omega$ at the point $x \in \partial\Omega$.  We stress that $H_1$ and $H_{n-1}$ are the mean and the Gaussian curvature of $\partial \Omega$, respectively. We also adopt the following conventions:
\begin{equation*}
    H_0=S_0=1, \quad\quad H_n=0.
\end{equation*}
A set $\Omega$ is $k$-convex (strictly $k$-convex) for  $k\in \{1, \dots, n-1\}$ if $H_j$ is non-negative (positive) at every point $x\in \partial \Omega$, for $j=1,\dots, k$.
 
When $\Omega$ is a $C^2$ convex set, we define the $k$-th quermassintegral of $\Omega$  as
\begin{equation*}
    W_k(\Omega)=\dfrac{1}{n} \int_{\partial \Omega} H_{k-1} d\mathcal{H}^{n-1},
\end{equation*}
with the special cases  $W_0(\Omega)=\abs{\Omega}$ and $W_1(\Omega)=P(\Omega)/n$.\\
If $u\in C^2(\Omega)$ and $t$ is a regular point of $u$, on the boundary of $\{u\le t\}$ it is possible to link the $k$-Hessian operator with the $(k-1)$-th curvature

\begin{equation}
    \label{hksk}
   \binom{n-1}{k-1} H_{k-1}=\frac{S_k^{ij}(D^2u)u_iu_j}{\abs{\nabla u}^{k+1}}.
\end{equation}

\subsection{Some properties of the \texorpdfstring{$k$}{k}-Torsional rigidity}
\begin{prop}
    The function
    $$k\to \binom{n}{k}^\frac{1}{k}T_k(\Omega)^\frac{1}{k}$$
    is increasing.
\end{prop}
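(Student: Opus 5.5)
We need to show that $k\mapsto \binom{n}{k}^{1/k}T_k(\Omega)^{1/k}$ is increasing. The approach is to compare the variational characterization \eqref{KTorsDef} at levels $k$ and $k+1$ (or any $\ell<k$), using a well-chosen test function and Newton's inequalities \eqref{eq:Newton-inequalities}.

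Since $T_k(\Omega)=\binom nk^{-1}\left(\int_\Omega -u_k\right)^k$, where $u_k$ solves \eqref{torsDirich}, the quantity in question is
\[
\binom nk^{1/k}T_k(\Omega)^{1/k}=\int_\Omega -u_k\,dx .
\]
So the claim is equivalent to showing that $k\mapsto \int_\Omega -u_k\,dx$ is increasing. This suggests a comparison-principle argument rather than working with the quotient directly.

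Fix $\ell<k$. The solution $u_k$ is $k$-convex, hence $\ell$-convex, so $u_k\in\Gamma_\ell(\Omega)$. Newton's inequality \eqref{eq:Newton-inequalities}, applied pointwise to $D^2u_k$, gives
\[
\frac{S_\ell(D^2u_k)}{\binom n\ell}\ge \left(\frac{S_k(D^2u_k)}{\binom nk}\right)^{\ell/k}=1 .
\]
Here Newton's inequalities are stated for positive definite matrices, but they extend to the cone $\Gamma_k$ of $k$-positive eigenvalue vectors; I will assume this standard extension, or argue by approximation. Thus $S_\ell(D^2u_k)\ge\binom n\ell = S_\ell(D^2u_\ell)$ in $\Omega$, with $u_k=u_\ell=0$ on $\partial\Omega$. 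The comparison principle for the $\ell$-Hessian operator within $\ell$-convex functions then yields $u_k\le u_\ell$, so $-u_k\ge -u_\ell$. Integrating gives $\int_\Omega -u_k\ge\int_\Omega -u_\ell$, which is the desired monotonicity.

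An alternative, purely variational route avoids the comparison principle. Insert $v=u_k$ into \eqref{KTorsDef} at level $\ell$ and use $S_\ell(D^2u_k)\ge\binom n\ell$, so that the denominator is at least $\binom n\ell\int -u_k$. This gives only an upper bound on the quotient, so it points in the wrong direction. The comparison argument is therefore the one I would use.

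The main obstacle is justifying the two analytic ingredients: Newton's inequality on the $k$-convex cone rather than for positive definite matrices, and the comparison principle for $S_\ell$ with merely $\ell$-convex subsolutions and supersolutions. Both are classical (Caffarelli–Nirenberg–Spruck, Trudinger–Wang), and I would cite them. Regularity of $u_k$ up to the boundary follows from the convexity and $C^2$ regularity of $\Omega$ via \cite{Caffarelli_existence}.
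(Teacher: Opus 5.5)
Your proof is correct and is the paper's argument: Newton's inequality applied pointwise to $D^2u_k$ gives $S_\ell(D^2u_k)\ge\binom{n}{\ell}$, and the comparison principle for $S_\ell$ then gives $u_k\le u_\ell$, hence $\int_\Omega -u_k\ge\int_\Omega -u_\ell$. You are also right that Newton's inequality is needed on the G{\aa}rding cone $\Gamma_k$ and not only for positive definite matrices, which is the only case the paper states, because $u_k$ is merely $k$-convex.
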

\begin{proof}
    Let $u_k$ be the solution to
    \begin{equation*}
        \begin{cases}
            S_k(D^2u_k)= \binom{n}{k}& \text{ in $\Omega$, }\\
            u_k=0 & \text{ on $\partial \Omega$},
        \end{cases} 
    \end{equation*}
    and let $u_\ell$ be the solution to
    \begin{equation*}
        \begin{cases}
            S_\ell(D^2u_\ell)= \binom{n}{\ell}& \text{ in $\Omega$, }\\
            u_\ell=0 & \text{ on $\partial \Omega$}.
        \end{cases} 
    \end{equation*}
   Let us assume that $k>\ell$, hence the Newton inequalities imply

   $$\frac{S_\ell(D^2u_k)}{\binom{n}{\ell}}\ge \left(\frac{S_k(D^2u_k)}{\binom{n}{k}}\right)^\frac{\ell}{k}=1,$$
   hence, $u_k$ is a supersolution to

 \begin{equation*}
        \begin{cases}
            S_\ell(D^2u_k)\ge \binom{n}{\ell}& \text{ in $\Omega$, }\\
            u_k=0 & \text{ on $\partial \Omega$}.
        \end{cases} 
    \end{equation*}
   As a consequence of the comparison principle for the $k$-Hessian equation (see \cite[Theorem 17.1]{gilbarg}), we have $u_\ell\ge u_k$, hence
    \begin{equation}\label{comparisonTkTl}
        \binom{n}{k}^\frac{1}{k}T_k(\Omega)^\frac{1}{k}=\int_\Omega -u_k\, dx\ge \int_\Omega -u_\ell\, dx=\binom{n}{\ell}^\frac{1}{\ell}T_\ell(\Omega)^\frac{1}{\ell}.
    \end{equation}
\end{proof}
\begin{oss}
    Combining \eqref{comparisonTkTl} with the Alexandrov-Fenchel inequalities \eqref{Aleksandrov_Fenchel_inequalities} we get that the functional $T_k(\Om)W_j(\Om)^{-\frac{k(n+2)}{n-j}}$ is bounded from above in the class of $C^2$ convex sets. Indeed, we have
    $$T_k(\Om)W_j(\Om)^{-\frac{k(n+2)}{n-j}}\le \omega_n^{-\frac{jk(n+2)}{n(n-j)}}\binom{n}{k}^{-1}T_n(\Omega)^\frac{k}{n}\abs{\Omega}^{-\frac{k(n+2)}{n}}.$$
    The right-hand side is an affine invariant functional and it always admits a maximum (see \cite[\S 10.3]{Schneider_2013} for the proof). 
    This property makes the study of \eqref{goal1} nontrivial in the class of convex sets.  
\end{oss}
Moreover, it holds
\begin{prop}\label{prop_mon}
    The $k$-Torsional rigidity is monotone increasing with respect to set inclusion.
\end{prop}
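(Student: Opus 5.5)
Let $\Omega_1\subseteq\Omega_2$ be convex sets of class $C^2$, and let $u_1,u_2$ be the solutions of \eqref{torsDirich} in $\Omega_1$ and $\Omega_2$ respectively. The monotonicity will follow from a comparison-principle argument of the same kind used in the proof of the previous proposition. By the representation
\[
T_k(\Omega)=\binom{n}{k}^{-1}\left(\int_\Omega -u\,dx\right)^{k},
\]
it is enough to show that $\int_{\Omega_1}-u_1\,dx\le\int_{\Omega_2}-u_2\,dx$.

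\textbf{Step 1: sign of the solutions.} Since $u_2$ is $k$-convex, it is in particular subharmonic ($S_1(D^2u_2)\ge0$). As $u_2=0$ on $\partial\Omega_2$, the maximum principle gives $u_2\le 0$ in $\Omega_2$. The same argument gives $u_1\le 0$ in $\Omega_1$.

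\textbf{Step 2: comparison on $\Omega_1$.} Restricting $u_2$ to $\Omega_1$, we have
\[
S_k(D^2u_2)=\binom{n}{k}=S_k(D^2u_1)\quad\text{in }\Omega_1,\qquad u_2\le 0=u_1\quad\text{on }\partial\Omega_1.
\]
Both functions are $k$-convex, so the comparison principle for the $k$-Hessian operator \cite[Theorem 17.1]{gilbarg}, already used in the previous proof, yields $u_2\le u_1$ in $\Omega_1$. The hypotheses of that theorem are satisfied: the operator is elliptic on $k$-convex functions, and $u_1,u_2\in C^2(\overline{\Omega_1})$ (or at least $C^2(\Omega_1)\cap C(\overline{\Omega_1})$) by the existence and regularity theory of \cite{Caffarelli_existence} for convex domains.

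\textbf{Step 3: integration.} Using Step 2 on $\Omega_1$ and $-u_2\ge0$ on $\Omega_2\setminus\Omega_1$ from Step 1,
\[
\int_{\Omega_1}-u_1\,dx\le\int_{\Omega_1}-u_2\,dx\le\int_{\Omega_2}-u_2\,dx .
\]
Since $s\mapsto s^k$ is increasing on $[0,\infty)$, this gives $T_k(\Omega_1)\le T_k(\Omega_2)$.

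\textbf{Alternative route.} One can instead argue directly from the variational characterization \eqref{KTorsDef}: extend an admissible $v$ on $\Omega_1$ by zero to $\Omega_2$. The extended function, however, is not $C^2$ and is not obviously $k$-convex across $\partial\Omega_1$. The comparison route avoids this difficulty, which is why it is the one adopted here.

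\textbf{Main obstacle.} The only delicate point is verifying the hypotheses of the comparison principle in Step 2, namely ellipticity, which holds because both functions are $k$-convex, and boundary regularity, which comes from \cite{Caffarelli_existence}. Everything else is routine.
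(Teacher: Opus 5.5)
Your proof is correct and follows the same route as the paper. You apply the comparison principle for $S_k$ on $\Omega_1$ to get $u_2\le u_1$ there, then use $-u_2\ge 0$ on $\Omega_2\setminus\Omega_1$ to chain $\int_{\Omega_1}-u_1\le\int_{\Omega_1}-u_2\le\int_{\Omega_2}-u_2$. Your Step 1 (deriving $u_2\le 0$ from subharmonicity) makes explicit a sign fact the paper uses without comment, both for the boundary inequality on $\partial\Omega_1$ and for the final integral inequality.
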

\begin{proof}
    Let $\Omega_1\subset \Omega_2$, and let $u_1$ and $u_2$ be the solutions to
 \begin{equation*}
        \begin{cases}
            S_k(D^2u_1)= \binom{n}{k}& \text{ in $\Omega_1$ }\\
            u_1=0 & \text{ on $\partial \Omega_1$}
        \end{cases} , \quad \begin{cases}
            S_k(D^2u_2)= \binom{n}{k}& \text{ in $\Omega_2$, }\\
            u_2=0 & \text{ on $\partial \Omega_2$}.
        \end{cases}  
    \end{equation*}

    In $\Omega_1$, both $u_1$ and $u_2$ solve the same equation, while on $\partial \Omega_1$ we have $u_2\le u_1$, hence as a consequence of the comparison principle for the $k$-Hessian equation (\cite[Theorem 17.1]{gilbarg}), we have

$$ \binom{n}{k}^\frac{1}{k}T_k(\Omega_2)^\frac{1}{k}=\int_{\Omega_2} -u_2\, dx\ge \int_{\Omega_1} -u_2\, dx \ge \int_{\Omega_1} -u_1\, dx= \binom{n}{k}^\frac{1}{k}T_k(\Omega_1)^\frac{1}{k}.$$
\end{proof}

\section{The optimization problem among ellipsoids}\label{sec:ellisse}

Let $A\in \mathcal{S}_n$, we can always assume $A=\text{diag}(A)$. The set
$$\mathcal{E}=\{\abs{Ax}<1\}$$
is a generic ellipsoid. If $A=\lambda \mathbb{I},$ the ellipsoid reduces to the ball of radius $R=1/\lambda$. 

We are now in position to prove Theorem \ref{teo:ellissi}. 

\begin{proof}[Proof of Theorem \ref{teo:ellissi}]
We prove that, if we prescribe the volume of the set $\mathcal{E}$, the ball maximizes $T_k$. The Aleksandrov-Fenchel inequalities \eqref{Aleksandrov_Fenchel_inequalities} and the monotonicity property of the $k$-Torsional rigidity Proposition \ref{prop_mon} will imply the claim. 

    Let us start by proving that the function 
\begin{equation*}
    u_k(x) =\frac{\abs{Ax}^2-1}{2}\left(\frac{\binom{n}{k}}{S_k(A^2)}\right)^\frac{1}{k}
\end{equation*}
is the solution to
\begin{equation*}
    \begin{cases}
        S_k(D^2u_k)=\binom{n}{k} & \text{ in } \mathcal{E}, \\
        u_k=0 & \text{ on } \partial\mathcal{E}. 
    \end{cases}
\end{equation*}

Clearly, $u_k=0$ on $\partial\mathcal{E}=\{\abs{Ax}=1\}$, moreover, by recalling the homogeneity of the operator $S_k$, we have
\begin{align*}
    D^2u_k= \left(\frac{\binom{n}{k}}{S_k(A^2)}\right)^\frac{1}{k} A^2 \quad \Rightarrow \quad
    S_k(D^2u_k)=\binom{n}{k}.   
\end{align*}
Now, let us compute the $k$-torsional rigidity of $\mathcal{E}$. 
\begin{equation} 
\label{ellips:torsion}
    \begin{aligned}
      \binom{n}{k}^{1/k}T_k(\mathcal{E})^\frac{1}{k}&= \int_{\mathcal{E}}-u_k=\left(\frac{\binom{n}{k}}{S_k(A^2)}\right)^\frac{1}{k}\int_{\mathcal{E}}\frac{1-\abs{Ax}^2}{2}
        = \left(\frac{\binom{n}{k}}{S_k(A^2)}\right)^\frac{1}{k}\int_{B_1}\frac{1-\abs{x}^2}{2} \det(A^{-1})\\
        &= \left(\frac{\binom{n}{k}}{S_k(A^2)}\right)^\frac{1}{k}\det(A^{-1}) \frac{\omega_n}{n+2}
        =\left(\frac{\binom{n}{k}}{S_k(A^2)}\right)^\frac{1}{k}\frac{\abs{\mathcal{E}}}{n+2}.
    \end{aligned}
\end{equation}

 Let us observe that $$\det(A)=(\det(A^{-1}))^{-1}=\left(\frac{\abs{\mathcal{E}}}{\omega_n}\right)^{-1},$$ and by Newton's inequalities \eqref{eq:Newton-inequalities} we have

 \begin{equation*}
     T_k(\mathcal{E})^\frac{1}{k}\le {\binom{n}{k}^{-1/k}} \left(\frac{1}{\det(A^2)}\right)^\frac{1}{n} \frac{\abs{\mathcal{E}}}{n+2}= {\binom{n}{k}^{-1/k}}\frac{1}{(n+2)\omega_n^\frac{2}{n}} \abs{\mathcal{E}}^\frac{n+2}{n},
 \end{equation*}
 with equality if and only if $A=\lambda \mathbb{I}$, so if and only if $\mathcal{E}$ is a ball.
\end{proof}

\begin{oss}\label{min:prob}
    As a consequence of the explicit expression of the $k$-Torsional rigidity of an ellipsoid \eqref{ellips:torsion}, it is clear that the \emph{minimization} problem of the $k$-Torsional rigidity, for $k=1,\dots,n-1$ is trivial. Indeed, if we prescribe the measure of the ellipsoids, it is always possible to choose a sequence of diagonal matrices $A_n$ (and hence, a sequence of ellipsoids) such that 
    $$\frac{\det(A_n^{-1})}{\left(S_k(A_n^2)\right)^\frac{1}{k}}\to 0.$$ 
    
    Moreover, for $k=1,\dots,n$, when we prescribe the quermassinegral $W_j$, $j\ge 1$,
    by the explicit expression of $T_k(\mathcal{E})$ and the Newton inequalities, one  can write

    \begin{equation*}
        T_k(\mathcal{E})W_j(\mathcal{E})^{-\frac{k(n+2)}{n-j}}\leq \frac{W_j(\mathcal{E})^{-\frac{k(n+2)}{n-j}}}{(n+2)^k\omega_n^\frac{2k}{n}}\binom{n}{k}^{-1}\abs{\mathcal{E}}^\frac{k(n+2)}{n},
    \end{equation*}
\end{oss}
and it is always possible to find a sequence of ellipsoids for which $W_j$ is fixed and  the measure goes to $0$.

\section{The curvature functionals}\label{sec:shape}
The present section deals with the computation of the Taylor expansion of the geometric quantities involved in the paper, that are the quermassintegrals and 

$$\int_{\partial\Omega(t)}
H_{j-1}^{\frac1{j+1}}\,d\mathcal H^{n-1}.$$

We consider a family of deformations of the unit ball $B$ in $\R^n$ $\Phi:[0,\delta)\times\mathbb{R}^n\rightarrow\mathbb{R}^n$,  satisfying $(i)$ -- $(iv)$. We set
\begin{equation}
    \label{omegat}
    \Om(t)=\Phi(t,B_1)\subset\R^n,
\end{equation}
 where $B_1=\Om(0)$. 
Without loss of generality we can assume that the velocity field $\displaystyle{\frac{\partial\Phi}{\partial t}\vrule_{t=0}} $ is orthogonal to $\partial\Omega(t)$ and for all $x \in \partial\Omega(t)$  we denote by 
\[
        V=\nu \cdot\frac{\partial\Phi}{\partial t}\vrule_{t=0}, \qquad A=\nu \cdot\frac{\partial^2\Phi}{\partial t^2}\vrule_{t=0} .
\]
respectively the initial scalar velocity and the projection of the initial acceleration along the unit outer normal $\nu$ of $\partial\Omega(t)$. Under these assumptions, for $t$ small enough, the boundary of $\partial\Omega(t)$ can be represented in polar coordinates as

\begin{equation}
    \label{polarcoordinates}
    r(\xi, t) = 1 + V(\xi)t + A(\xi) \frac{t^2}{2} + o(t^2).
\end{equation}

\subsection*{Quermassintegrals expansion}
We recall the expansion of the $j$-th mean curvature that was proved in \cite{VW}.
\begin{lemma}[\cite{VW}]\label{exp-curv}
Let $\Omega\subset\mathbb{R}^n$. If there exists a function $r\in C^2(\partial B_1)$ such that $\partial\Omega=\left\{r(\xi)\xi: \xi\in \partial B_1\right\}$, then we have
    \begin{equation*}
    \begin{aligned}
        H_j&=\binom{n-1}{j}^{-1}(r^2+|\nabla_\xi r|^2)^{-\frac{j+2}{2}}\\
        &\times\sum_{m=0}^j(-1)^m\binom{n-1-m}{j-m}r^{-m}\left(r^2S_m(D^2r)+\frac{n-1+j-2m}{n-1-m}r_ir_l[T_m]_i^l(D^2r) \right),
        \end{aligned}
    \end{equation*}
    where 
    $r_i=\frac{\partial r}{\partial\xi_i}$.
\end{lemma}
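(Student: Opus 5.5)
The plan is a direct computation: write the second fundamental form of the radial graph, form the Weingarten map, and observe that it is a rank-one perturbation of a multiple of $N:=rI-D^2r$. Throughout, $\nabla r=(r_i)$ and $D^2r=(r_{il})$ are the gradient and covariant Hessian of $r$ on $\partial B_1$, computed in a local orthonormal frame $e_1,\dots,e_{n-1}$ of the sphere. We set $p=\nabla r$ and $w=(r^2+|p|^2)^{1/2}$.

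\textbf{Geometry of the graph.} With $X(\xi)=r(\xi)\xi$, the tangent vectors are $X_i=r_i\xi+re_i$, so
\[
g_{il}=r^2\delta_{il}+r_ir_l,\qquad \nu=\frac{r\xi-\nabla r}{w}.
\]
Differentiating $X_i$ along $e_l$ on the sphere uses the identities $\nabla_{e_l}\xi=e_l$ and $\langle\nabla_{e_l} e_i,\xi\rangle=-\delta_{il}$. This gives
\[
h_{il}=\frac{1}{w}\left(r^2\delta_{il}+2r_ir_l-rr_{il}\right)=\frac1w\left(rN+2pp^T\right)_{il}.
\]
By Sherman--Morrison, $g^{-1}=r^{-2}\bigl(I-pp^T/w^2\bigr)$. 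Multiplying out, the Weingarten matrix becomes
\[
g^{-1}h=\frac{1}{r^2w}\left(rN+p\,q^T\right),\qquad q=\frac{r}{w^2}\left(2rp-Np\right).
\]
Here we used the relation $2(1-|p|^2/w^2)=2r^2/w^2$.

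\textbf{Rank-one expansion.} Next I would apply the rank-one rule $S_j(B+ab^T)=S_j(B)+b^T[T_{j-1}(B)]a$. This follows from \eqref{eq:Sk-Kronecker} and \eqref{cofproof}, because all terms that are quadratic in a rank-one matrix vanish by antisymmetry of $\delta$. It yields
\[
S_j(g^{-1}h)=(r^2w)^{-j}\Big(r^jS_j(N)+\frac{r^{j}}{w^2}\big(2r\,p^TT_{j-1}(N)p-p^TNT_{j-1}(N)p\big)\Big).
\]
The Newton-tensor identity $NT_{j-1}(N)=S_j(N)I-T_j(N)$ lets me replace the last term by $|p|^2S_j(N)-p^TT_j(N)p$. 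Writing the first term as $r^jS_j(N)\,w^2/w^2$, the $|p|^2S_j(N)$ contributions cancel. What remains is
\[
S_j=r^{-j}w^{-j-2}\Big(r^2S_j(N)+2r\,p^TT_{j-1}(N)p+p^TT_j(N)p\Big).
\]
Up to the normalization $\binom{n-1}{j}^{-1}$, this is already the prefactor $r^{-j}w^{-(j+2)}$ of the claimed formula, with $r^{-j}$ to be redistributed at the end.

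\textbf{Binomial expansions and bookkeeping.} Since $N=rI-D^2r$, the binomial expansions in $(n-1)$ dimensions read
\[
S_j(N)=\sum_{m}(-1)^m\binom{n-1-m}{j-m}r^{j-m}S_m(D^2r),\qquad T_i(N)=\sum_m(-1)^m\binom{n-2-m}{i-m}r^{i-m}T_m(D^2r).
\]
Substituting both expansions, the coefficient of $(-1)^mr^{j-m}p^TT_m(D^2r)p$ is
\[
2\binom{n-2-m}{j-1-m}-\binom{n-2-m}{j-1-m}+\binom{n-2-m}{j-m}=\binom{n-2-m}{j-1-m}+\binom{n-2-m}{j-m}.
\]
The sign flip on the middle term comes from the index shift $m\to m$ in the $r^{j-1-m}$ term relative to the $T_j$ term. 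Simplifying with the ratio $\binom{n-2-m}{j-1-m}=\frac{j-m}{n-1-m}\binom{n-1-m}{j-m}$ should give exactly $\frac{n-1+j-2m}{n-1-m}\binom{n-1-m}{j-m}$, which is the coefficient in the statement.

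\textbf{Main obstacle.} I expect the delicate step to be this final bookkeeping: tracking signs and index shifts between the $T_{j-1}$ and $T_j$ sums so that they merge into the single coefficient $\frac{n-1+j-2m}{n-1-m}$. The first computation I would check is the sign of the $p^TT_j(N)p$ term. If that sign is off, the combined coefficient comes out as $\frac{j-m}{n-1-m}$ plus or minus $\frac{n-1-j}{n-1-m}$, and only one choice produces the stated expression.
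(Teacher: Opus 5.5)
Your set-up and the intermediate identity
\[
S_j(\kappa)=r^{-j}w^{-j-2}\bigl(r^2S_j(N)+2r\,p^TT_{j-1}(N)p+p^TT_j(N)p\bigr)
\]
are correct. The formulas for $h_{il}$, $g^{-1}$ and $q$, the rank-one rule, and the identity $NT_{j-1}(N)=S_j(N)I-T_j(N)$ all check out. The paper does not prove this lemma; it quotes it from \cite{VW}. So your direct computation is a legitimate self-contained route.

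The error is in the final bookkeeping. Substitute the binomial expansions and look at the coefficient of $p^TT_m(D^2r)p$. The term $2r\,p^TT_{j-1}(N)p$ contributes $2(-1)^m\binom{n-2-m}{j-1-m}\,r\cdot r^{j-1-m}$. The term $p^TT_j(N)p$ contributes $(-1)^m\binom{n-2-m}{j-m}\,r^{j-m}$. These carry the same power $r^{j-m}$ and the same sign, with no index shift. There is therefore no term $-\binom{n-2-m}{j-1-m}$, and the coefficient is $2\binom{n-2-m}{j-1-m}+\binom{n-2-m}{j-m}$.

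The expression you wrote, $\binom{n-2-m}{j-1-m}+\binom{n-2-m}{j-m}$, equals $\binom{n-1-m}{j-m}$ by Pascal's rule. So your claim that it simplifies to $\frac{n-1+j-2m}{n-1-m}\binom{n-1-m}{j-m}$ is false: already for $j=1$, $m=0$ it gives $n-1$ instead of the required $n$.

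With the correct coefficient the argument closes. Using $\binom{n-2-m}{j-1-m}=\frac{j-m}{n-1-m}\binom{n-1-m}{j-m}$ and $\binom{n-2-m}{j-m}=\frac{n-1-j}{n-1-m}\binom{n-1-m}{j-m}$, you get
\[
\frac{2(j-m)+(n-1-j)}{n-1-m}\binom{n-1-m}{j-m}=\frac{n-1+j-2m}{n-1-m}\binom{n-1-m}{j-m}.
\]
Then $r^{-j}r^{j-m}=r^{-m}$ puts everything in the stated form.

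For the same reason, your ``main obstacle'' diagnosis points at the wrong place. No choice of sign in $\binom{n-2-m}{j-1-m}\pm\binom{n-2-m}{j-m}$ yields the target, and the $+$ sign you derived for $p^TT_j(N)p$ is right. What must be kept is the factor $2$ that originates in the term $2r_ir_l$ of $h_{il}$ (equivalently, the $2r^2/w^2$ in $q$).
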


Thanks to this result, we can prove the following.

\begin{teorema}\label{exp-querm}
    Let $\{\Omega(t)\}_t$ be the family of open, bounded convex sets of $\R^n$ defined in \eqref{omegat}, with $\Omega(0)=B_1$. Then, we can write the quermassintegrals as follows
    \begin{equation*}
        \begin{split}
            W_{j}(\Omega(t))&=\omega_n+\frac{t(n-j)}{n}\int_{\partial B_1}V(\xi)\,d\mathcal{H}^{n-1}_\xi\\
            &+\frac{t^2}{2n}\int_{\partial B_1}\left[(n^2+j^2+j-n-2nj)V(\xi)^2+\frac{nj-j^2}{n-1}\abs{\nabla_\xi V}^2+(n-j)A(\xi)\right]\,d\mathcal{H}^{n-1}_\xi+o(t^2),
        \end{split}
    \end{equation*}
    for $j=1,\dots,n-1$, and in particular
    \begin{equation*}
        W_0(\Omega(t))=\omega_n+t\int_{\partial B_1}V(\xi)\,d\mathcal{H}^{n-1}_\xi+\frac{t^2}{2}\int_{\partial B_1}\left[A(\xi)+(n-1)V(\xi)^2 \right]\,d\mathcal{H}^{n-1}_\xi+o(t^2).
    \end{equation*}
\end{teorema}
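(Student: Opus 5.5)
The plan is to write $\partial\Omega(t)$ as a radial graph $r(\xi,t)\xi$ with $r$ given by \eqref{polarcoordinates}, expand every ingredient of $W_j(\Omega(t))=\frac1n\int_{\partial\Omega(t)}H_{j-1}\,d\mathcal H^{n-1}$ up to order $t^2$, and integrate over $\partial B_1$. In polar coordinates the area element is
\[
d\mathcal H^{n-1}=r^{n-2}\sqrt{r^2+|\nabla_\xi r|^2}\,d\mathcal H^{n-1}_\xi .
\]
Lemma \ref{exp-curv}, applied with $j-1$ in place of $j$, gives $H_{j-1}$ explicitly. The $(r^2+|\nabla_\xi r|^2)^{-(j+1)/2}$ factor of $H_{j-1}$ combines with the square root of the area element into $(r^2+|\nabla_\xi r|^2)^{-j/2}$. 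Writing $r=1+\varepsilon$ with $\varepsilon=Vt+At^2/2+o(t^2)$, only terms that are at most quadratic in $\varepsilon$ and its derivatives have to be kept. For this, note:
\begin{itemize}
\item $S_m(D^2r)=O(t^m)$ and $r_ir_l[T_m]_i^l(D^2r)=O(t^{m+2})$;
\item so only $m=0,1,2$ contribute, and for $m=1,2$ only a few pieces survive.
\end{itemize}
Here $D^2r$ denotes the spherical Hessian (the covariant Hessian on $\partial B_1$, as in \cite{VW}).

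Concretely, I will expand the integrand as
\[
(1+\varepsilon)^{n-2}\,\bigl((1+\varepsilon)^2+|\nabla\varepsilon|^2\bigr)^{-j/2}
\Bigl[\tbinom{n-1}{j-1}(1+\varepsilon)^2+c_1|\nabla \varepsilon|^2-\tbinom{n-2}{j-2}(1+\varepsilon)\Delta\varepsilon+\tbinom{n-3}{j-3}S_2(D^2\varepsilon)\Bigr]\tbinom{n-1}{j-1}^{-1},
\]
where $c_1=\binom{n-1}{j-1}\frac{n+j-2}{n-1}$ comes from the $m=0$ term. I then collect the orders $t$ and $t^2$.

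The first-order term involves $V$ and $\Delta V$. Integrating over the sphere kills $\Delta V$, and the coefficient of $\int V$ should reduce to $n-j$. This is the standard identity $\frac{d}{dt}W_j=\frac{n-j}{n}\int H_j V$ at the ball, where $H_j=1$.

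At second order there are several kinds of terms:
\begin{itemize}
\item $V^2$ terms, coming from $(1+\varepsilon)^{n}$-type factors;
\item the $A$ term, with the same coefficient $n-j$ as $V$ at first order;
\item $|\nabla V|^2$ terms, coming from three sources: the square-root/power factor ($-\frac j2|\nabla V|^2$), the term $c_1|\nabla\varepsilon|^2$, and the cross term $-\binom{n-2}{j-2}\binom{n-1}{j-1}^{-1}(n-2-j+\dots)V\Delta V$, which becomes $+|\nabla V|^2$ after integration by parts on $\partial B_1$;
\item the term $\int_{\partial B_1} S_2(D^2V)$.
\end{itemize}
The last item is the main obstacle. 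It requires an integration by parts on the sphere where curvature enters through the Bochner formula: on $\mathbb S^{n-1}$,
\[
\int (\Delta V)^2-|D^2V|^2=(n-2)\int|\nabla V|^2,
\]
so that $\int 2S_2(D^2V)=(n-2)\int|\nabla V|^2$. This converts everything into $|\nabla V|^2$. I would then simplify the binomial ratios, using $\binom{n-2}{j-2}/\binom{n-1}{j-1}=\frac{j-1}{n-1}$ and $\binom{n-3}{j-3}/\binom{n-1}{j-1}=\frac{(j-1)(j-2)}{(n-1)(n-2)}$, and check that the $|\nabla V|^2$ coefficient sums to $\frac{j(n-j)}{n-1}$ and the $V^2$ coefficient to $n^2+j^2+j-n-2nj=(n-j)(n-j-1)$. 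The latter matches $\frac{d^2}{dt^2}$ of $(1+t)^{n-j}$ for dilations, which is a consistency check. For $j=1,2$ the sums in Lemma \ref{exp-curv} truncate earlier, and the same formulas hold with vanishing binomials.

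For $W_0=|\Omega(t)|=\frac1n\int_{\partial B_1}r^n\,d\mathcal H^{n-1}_\xi$, I expand directly:
\[
r^n=1+nVt+\bigl(\tfrac n2A+\tfrac{n(n-1)}2V^2\bigr)t^2+o(t^2),
\]
which gives the second formula at once. I would also cross-check the general formula against dilations $r=1+t$, where $W_j=\omega_n(1+t)^{n-j}$, to confirm all the constants.
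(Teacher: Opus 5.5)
Your proposal is correct and follows essentially the same route as the paper: you use the polar representation and Lemma \ref{exp-curv}, absorb the area element into $(r^2+|\nabla_\xi r|^2)^{-j/2}$, truncate to $m\le 2$, and close with Green--Beltrami and $\int_{\partial B_1}S_2(D^2V)=\frac{n-2}{2}\int_{\partial B_1}|\nabla_\xi V|^2$, and your bookkeeping does give the coefficients $\frac{j(n-j)}{n-1}$ and $(n-j)(n-j-1)$. The differences are minor: you justify the $S_2$ identity via Bochner's formula on the sphere instead of citing \cite{VW}, and you derive the $W_0$ formula directly from $|\Omega(t)|=\frac1n\int_{\partial B_1}r^n\,d\mathcal{H}^{n-1}_\xi$, whereas the paper simply states it as the case $j=0$.
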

\begin{proof}
As we are representing the boundary of $\Omega(t)$ in polar coordinates, Lemma \ref{exp-curv} is in force, and we can write
\begin{equation}\label{parwj}
    \begin{split}
        &W_{j}(\Omega(t))=\frac{1}{n}\int_{\partial\Omega(t)}H_{j-1}\,d\mathcal{H}^{n-1}\\
        &=\frac{1}{n\binom{n-1}{j-1}}\int_{\partial B_1}\sum_{m=0}^{j-1}\frac{(-1)^m\binom{n-1-m}{j-1-m}r^{n-m-2}}{(r^2+|\nabla_\xi r|^2)^\frac{j}{2}}\left(r^2S_m(D^2r)+\frac{n+j-2-2m}{n-1-m}r_ir_l[T_m]_i^l(D^2r) \right)\,d\mathcal{H}_\xi^{n-1}\hspace{-3mm},
    \end{split}
\end{equation}
where we expressed the surface element as 
\[
d\mathcal H^{n-1}_{\partial\Omega(t)}
=
r^{n-2}
\bigl(r^2+|\nabla_\xi r|^2\bigr)^{1/2}
d\mathcal H^{n-1}_\xi.
\]

We observe that, when we represent the boundary $\partial\Omega(t)$ as in \eqref{polarcoordinates},
the second term in the sum in \eqref{parwj} is of higher order in $t$, except when $m=0$, that gives us
\begin{equation*}
    \binom{n-1}{j-1}\frac{n+j-2}{n-1}r^{n-2}\delta_i^j r_i r_j=\binom{n-1}{j-1}\frac{n+j-2}{n-1}r^{n-2}|\nabla_\xi r|^2.
\end{equation*}
In the first term of the sum in \eqref{parwj}, we have
\begin{equation*}
    \begin{split}
        &S_0(D^2r)=1,\quad S_1(D^2r)=t\Delta_\xi V+\frac{t^2}{2}\Delta_\xi A+o(t^2),\\
        &S_2(D^2r)=\frac{1}{2}(S_1(D^2r))^2-\sum_{i,j}(D^2r_{ij})^2=t^2S_2(D^2V)+o(t^2),\\
        &S_m(D^2r)=o(t^2),\quad \forall\,m\ge 3.
    \end{split}
\end{equation*}
Then, we just have to consider
\begin{equation*}
    \frac{\binom{n-1}{j-1}\frac{n+j-2}{n-1}r^{n-2}|\nabla_\xi r|^2+\sum_{m=0}^{2}(-1)^m\binom{n-1-m}{j-1-m}r^{n-m}S_m(D^2r)}{(r^2+|\nabla_\xi r|^2)^\frac{j}{2}}=:\frac{f}{g}.
\end{equation*}

The numerator can be expanded as follows
\begin{equation*}
    \begin{split}
         f&=\binom{n-1}{j-1}\frac{n+j-2}{n-1}r^{n-2}|\nabla_\xi r|^2+\binom{n-1}{j-1}r^nS_0(D^2r)-\binom{n-2}{j-2}r^{n-1}S_1(D^2r)\\
         &+\binom{n-3}{j-3}r^{n-2}S_2(D^2r)+o(t^2)\\
         &=\binom{n-1}{j-1}+t\left[n\binom{n-1}{j-1}V-\binom{n-2}{j-2}\Delta_\xi V\right]\\[1.5ex]
         &+\frac{t^2}{2}\left[n(n-1)\binom{n-1}{j-1}V^2+n\binom{n-1}{j-1}A-2(n-1)\binom{n-2}{j-2}V\Delta_\xi V-\binom{n-2}{j-2}\Delta_\xi A\right]\\[1.5ex]
         &+\frac{t^2}{2}\left[2\binom{n-3}{j-3}S_2(D^2V)+ 2\frac{n+j-2}{n-1}\binom{n-1}{j-1}|\nabla_\xi V|^2\right]+o(t^2),
    \end{split} 
\end{equation*}
where we are using the convention that $\binom{n}{s}=0$, if $s<0$.
Now we expand $g$. We start with
\begin{equation*}
    (r^2+|\nabla_\xi r|^2)^\frac{1}{2}=1+tV+\frac{t^2}{2}(A+|\nabla_\xi V|^2)+o(t^2),
\end{equation*}
and then
\begin{equation*}
    \begin{split}
        g&=(r^2+|\nabla_\xi r|^2)^\frac{j}{2}=\left\{1+\left[tV+\frac{t^2}{2}(A+|\nabla_\xi V|^2) \right] \right\}^{j}+o(t^2)\\
        &=\sum_{m=0}^{j}\binom{j}{m}\left[tV+\frac{t^2}{2}(A+|\nabla_\xi V|^2) \right]^m+o(t^2)\\
        &=1+tjV+\frac{t^2}{2}[j(A+|\nabla_\xi V|^2)+j(j-1)V^2]+o(t^2).
    \end{split}
\end{equation*}

Finally,
\begin{equation}
    \label{1}
    \begin{aligned}
\frac{f}{g}
&=
\binom{n-1}{j-1}
+t\left[
(n-j)\binom{n-1}{j-1}V
-\binom{n-2}{j-2}\Delta_\xi V
\right]\\
&+\frac{t^2}{2}\Bigg[
(n-j)(n-j-1)\binom{n-1}{j-1}V^2+(n-j)\binom{n-1}{j-1}A\\
&-2(n-j-1)\binom{n-2}{j-2}
V\Delta_\xi V -\binom{n-2}{j-2}\Delta_\xi A\\
&+2\binom{n-3}{j-3}S_2(D^2V) +\frac{2n+3j-4-nj}{n-1}
\binom{n-1}{j-1}|\nabla_\xi V|^2
\Bigg]+o(t^2).
\end{aligned}
\end{equation}
 
Applying \emph{the Green-Beltrami identity}, we have
\begin{equation}\label{Green-Beltrami}
    \int_{\partial B_1}\Delta_\xi V\,d\mathcal{H}_\xi^{n-1}=\int_{\partial B_1}\Delta_\xi A\,d\mathcal{H}_\xi^{n-1}=0.
\end{equation}
We recall (see \cite{VW} for the proof of this result)
\begin{equation*}
    \int_{\partial B_1}S_k(D^2V)\,d\mathcal{H}^{n-1}=\frac{n-k}{k}\int_{\partial B_1}V^iV_j[T_{k-2}]_i^j(D^2V)\,d\mathcal{H}^{n-1},\quad k\geq 2.
\end{equation*}
Then, for $k=2$, we get
\begin{equation}\label{S_2}
    \int_{\partial B_1}S_2(D^2V)\,d\Hh^{n-1}=\frac{n-2}{2}\int_{\partial B_1}\abs{\nabla_\xi V}^2\,d\Hh^{n-1}_\xi.
\end{equation}

Joining \eqref{1}, \eqref{Green-Beltrami} and \eqref{S_2}, we get the claim.
\end{proof}

We observe that the quermassintegral constraint $W_j(\Omega_t)=C$ gives

$$\frac{d W_j(\Omega_t)}{d t}\bigg|_{t=0}=0, \qquad \frac{d^2 W_j(\Omega_t)}{d t^2}\bigg|_{t=0}=0.$$

Then Theorem \ref{exp-querm} allow us to make explicit the previous conditions
\begin{align}
        \frac{d W_j(\Omega_t)}{d t}\bigg|_{t=0}=0\iff\int_{\p B_1} V(\xi)\,d\Hh_\xi^{n-1}&=0, \tag{I}\label{eq:I}\\
        \frac{d^2 W_j(\Omega_t)}{d t^2}\bigg|_{t=0}=0\iff\int_{\p B_1} A(\xi)\,d\Hh_\xi^{n-1}
        &=-\int_{\p B_1}\left[(n-j-1)V(\xi)^2+\frac{j}{n-1}\abs{\nabla_\xi V}^2\right]\,d\Hh^{n-1}. \tag{II}\label{eq:II}
\end{align}
In the next result we write the Taylor expansion of the curvature integral that appears in \eqref{kaffine}.
\begin{teorema}
Let $\{\Omega(t)\}_t$ be the family of open, bounded convex sets of $\R^n$ defined in \eqref{omegat}, with $\Omega(0)=B_1$. Let \(j=2,\ldots,n\), and assume
that \(\partial\Omega(t)\) is represented by
\[
r(\xi,t)=1+tV(\xi)+\frac{t^2}{2}A(\xi)+o(t^2),
\]
where \(o(t^2)\) is uniform in \(\xi\). Then, 
we have
\begin{equation}\label{IntCurvExp}
\begin{split}
\int_{\partial\Omega(t)}
H_{j-1}^{\frac1{j+1}}\,d\mathcal H^{n-1}
&=
n\omega_n
+
t\,\frac{nj+n-2j}{j+1}
\int_{\partial B_1}
V\,d\mathcal H^{n-1}_\xi
\\
&
+
\frac{t^2}{2}
\int_{\partial B_1}
\Bigg[
\frac{(nj+n-2j)(nj+n-3j-1)}{(j+1)^2}V^2
+
\frac{nj+n-2j}{j+1}A
\\
&
+
\frac{j\bigl(j^2+2jn-6j+2n+1\bigr)}
{(j+1)^2(n-1)}
|\nabla_\xi V|^2
-
\frac{j(j-1)^2}
{(j+1)^2(n-1)^2}
(\Delta_\xi V)^2
\Bigg]
\,d\mathcal H^{n-1}_\xi\\
&+
o(t^2).
\end{split}
\end{equation}
\end{teorema}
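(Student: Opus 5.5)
The plan is to follow the proof of Theorem \ref{exp-querm} step by step, replacing $H_{j-1}$ by $H_{j-1}^{1/(j+1)}$. The one genuinely new feature is that the power $1/(j+1)$ is nonlinear. Because of it, quadratic terms built from first-order quantities survive; the main one is $(\Delta_\xi V)^2$, which cannot be integrated away.

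First I write everything in polar coordinates using Lemma \ref{exp-curv} (with index $j-1$) and the surface element $r^{n-2}(r^2+|\nabla_\xi r|^2)^{1/2}\,d\mathcal H^{n-1}_\xi$. As in the proof of Theorem \ref{exp-querm}, I keep only the terms with $m=0,1,2$ up to order $t^2$. In the $r_ir_l[T_m]_i^l$ part only the $m=0$ term survives, and it gives a $|\nabla_\xi r|^2$ contribution. Reusing the computation \eqref{1} of $f/g$ with $j$ replaced by $j-1$, and with the denominator exponent changed to $(j+1)/2$, I get
\[
H_{j-1}=1+t\,a_1+\tfrac{t^2}{2}a_2+o(t^2),\qquad a_1=-V-\tfrac{j-1}{n-1}\Delta_\xi V,
\]
after dividing by $\binom{n-1}{j-1}$ and using $\binom{n-2}{j-2}/\binom{n-1}{j-1}=\tfrac{j-1}{n-1}$. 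Here $a_2$ is an explicit combination of $V^2$, $A$, $V\Delta_\xi V$, $\Delta_\xi A$, $S_2(D^2V)$ and $|\nabla_\xi V|^2$. As a sanity check, $a_1$ should reproduce the linearized mean curvature on the sphere.

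Next I expand the power:
\[
H_{j-1}^{1/(j+1)}=1+\tfrac{t}{j+1}a_1+\tfrac{t^2}{2}\Big[\tfrac{a_2}{j+1}-\tfrac{j}{(j+1)^2}a_1^2\Big]+o(t^2).
\]
I multiply this by the area factor $r^{n-1}(1+|\nabla_\xi r|^2/r^2)^{1/2}=1+(n-1)tV+\tfrac{t^2}{2}\big[(n-1)(n-2)V^2+(n-1)A+|\nabla_\xi V|^2\big]+o(t^2)$. Collecting the first-order terms gives $(n-1)V+\tfrac{a_1}{j+1}$. After integration, \eqref{Green-Beltrami} removes the $\Delta_\xi V$ part, which leaves $\frac{(n-1)(j+1)-1}{j+1}V=\frac{nj+n-j-2}{j+1}V$.

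The stated coefficient is $\frac{nj+n-2j}{j+1}$. This discrepancy is the first thing I would recheck. It could come from the normalization or sign convention of $H_{j-1}$ in Lemma \ref{exp-curv}: the correct first variation of $H_{j-1}$ on the unit sphere in the normal direction is $-(j-1)V-\tfrac{j-1}{n-1}\Delta_\xi V$, not $-V$. That correction gives exactly $(n-1)-\tfrac{j-1}{j+1}=\tfrac{nj+n-2j}{j+1}$. So I will recompute $a_1$ and $a_2$ carefully, making sure the factor $r^{-m}$ and the normalization $(r^2+|\nabla r|^2)^{-(j+1)/2}$ are combined correctly. Theorem \ref{exp-querm} at orders $t$ and $t^2$ serves as a consistency check on $a_2$.

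At second order I integrate over $\partial B_1$. Three tools reduce everything to the four quantities $V^2$, $A$, $|\nabla_\xi V|^2$ and $(\Delta_\xi V)^2$:
\begin{itemize}
\item \eqref{Green-Beltrami} kills $\Delta_\xi A$.
\item $\int V\Delta_\xi V=-\int|\nabla_\xi V|^2$ converts the mixed term.
\item \eqref{S_2} converts the $S_2(D^2V)$ term.
\end{itemize}
The term $-\frac{j}{(j+1)^2}a_1^2$ generates $V^2$, $V\Delta_\xi V$ and $\frac{(j-1)^2}{(n-1)^2}(\Delta_\xi V)^2$. The last of these matches the $-\frac{j(j-1)^2}{(j+1)^2(n-1)^2}(\Delta_\xi V)^2$ in \eqref{IntCurvExp}, which confirms the structure.

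The main obstacle is the bookkeeping of the $t^2$ coefficients in front of $V^2$ and $|\nabla_\xi V|^2$. These collect contributions from $a_2$, from the cross term $(n-1)V\cdot\frac{a_1}{j+1}$, from $a_1^2$, and from the area factor. I would organize the computation by first writing the integrand as $P V^2+Q\,V\Delta_\xi V+R|\nabla_\xi V|^2+S\,S_2(D^2V)+\dots$, and only then integrating by parts. I would also check the final formula in two special cases. For $j=1$ it should reduce to the perimeter expansion $\tfrac{n}{\omega_n}\cdot$(Theorem \ref{exp-querm} with $j=1$), up to the power $1/2$. Dilations $V\equiv c$, $A\equiv0$ must give exactly $n\omega_n(1+ct)^{(nj+n-2j)/(j+1)}$ to second order.
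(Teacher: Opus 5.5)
Your plan is the paper's argument. You write $H_{j-1}=f_1/g_1$ from Lemma \ref{exp-curv} keeping $m\le 2$, expand the power $\frac1{j+1}$, multiply by $r^{n-2}(r^2+|\nabla_\xi r|^2)^{1/2}$, and integrate using \eqref{Green-Beltrami}, $\int V\Delta_\xi V=-\int|\nabla_\xi V|^2$ and \eqref{S_2}. With $a_1=-(j-1)V-\frac{j-1}{n-1}\Delta_\xi V$, this bookkeeping reproduces every coefficient in \eqref{IntCurvExp}; for instance, the $|\nabla_\xi V|^2$ coefficient has numerator $2(n+j-2)(j+1)+2(j-1)(nj+n-3j-1)+(j-1)(j-2)(j+1)=j(j^2+2jn-6j+2n+1)$ over $(j+1)^2(n-1)$.

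Your initial value $a_1=-V-\dots$ came from a slip in recycling \eqref{1}, not from a normalization or sign issue in the Lemma. Equation \eqref{1} already concerns $H_{j-1}$, with the area factor folded in, so no shift $j\mapsto j-1$ is needed. Computing $f_1/g_1$ directly gives $2V-(j+1)V=-(j-1)V$ at order $t$. Likewise, your $j=1$ check should compare with $n\,W_1(\Omega(t))$ itself, since $H_0\equiv 1$ and no power enters.
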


\begin{proof}
By Lemma \ref{exp-curv}, for \(H_{j-1}\) we have
\begin{equation*}
\begin{aligned}
H_{j-1}
&=
\binom{n-1}{j-1}^{-1}
\bigl(r^2+|\nabla_\xi r|^2\bigr)^{-\frac{j+1}{2}}
\\
&\quad\times
\sum_{m=0}^{j-1}
(-1)^m
\binom{n-1-m}{j-1-m}
r^{-m}
\left(
r^2S_m(D^2r)
+
\frac{n+j-2-2m}{n-1-m}
r_i r_l [T_m]^l_i(D^2r)
\right).
\end{aligned}
\end{equation*}
As in the proof of Theorem \eqref{exp-querm}, we write
\begin{equation*}
H_{j-1}=\frac{f_1}{g_1},
\end{equation*}
where
\begin{equation*}
\begin{aligned}
f_1
&=
\binom{n-1}{j-1}
\frac{n+j-2}{n-1}
|\nabla_\xi r|^2
+
\sum_{m=0}^{2}
(-1)^m
\binom{n-1-m}{j-1-m}
r^{2-m}S_m(D^2r),
\\
g_1
&=
\binom{n-1}{j-1}
\bigl(r^2+|\nabla_\xi r|^2\bigr)^{\frac{j+1}{2}}.
\end{aligned}
\end{equation*}

Moreover, we recall that
\begin{equation*}
    \begin{split}
        &S_0(D^2r)=1,\quad S_1(D^2r)=t\Delta_\xi V+\frac{t^2}{2}\Delta_\xi A+o(t^2),\\
        &S_2(D^2r)=t^2S_2(D^2V)+o(t^2),\quad S_m(D^2r)=o(t^2),\quad \forall\,m\ge 3.
    \end{split}
\end{equation*}
Thus
\begin{equation*}
\begin{aligned}
f_1
&=
\binom{n-1}{j-1}
+
t
\left[
2\binom{n-1}{j-1}V
-
\binom{n-2}{j-2}\Delta_\xi V
\right]
+
\frac{t^2}{2}
\Bigg[
2\binom{n-1}{j-1}A
+
2\binom{n-1}{j-1}V^2+
\\
&-
2\binom{n-2}{j-2}V\Delta_\xi V
-
\binom{n-2}{j-2}\Delta_\xi A
+
2\binom{n-3}{j-3}S_2(D^2V)+2\binom{n-1}{j-1}
\frac{n+j-2}{n-1}
|\nabla_\xi V|^2
\Bigg]
+
o(t^2).
\end{aligned}
\end{equation*}
Moreover,
\begin{equation*}
\begin{aligned}
g_1
&=
\binom{n-1}{j-1}
\bigl(r^2+|\nabla_\xi r|^2\bigr)^{\frac{j+1}{2}}
\\
&=
\binom{n-1}{j-1}
\Bigg\{
1
+
t(j+1)V
+
\frac{t^2}{2}
\left[
(j+1)A
+
j(j+1)V^2
+
(j+1)|\nabla_\xi V|^2
\right]
\Bigg\}
+
o(t^2).
\end{aligned}
\end{equation*}
Dividing the two expansions, we obtain
\begin{equation*}
\begin{aligned}
H_{j-1}
&=
1
-
t
\left[
(j-1)V
+
\frac{j-1}{n-1}\Delta_\xi V
\right]
\\
&\quad
+
\frac{t^2}{2}
\Bigg[
j(j-1)V^2
-
(j-1)A
-
\frac{j-1}{n-1}\Delta_\xi A
+
\frac{2j(j-1)}{n-1}V\Delta_\xi V
\\
&\quad
+
2
\frac{\binom{n-3}{j-3}}{\binom{n-1}{j-1}}
S_2(D^2V)
-
\frac{(j-1)(n-3)}{n-1}
|\nabla_\xi V|^2
\Bigg]
+
o(t^2).
\end{aligned}
\end{equation*}

Since \(H_{j-1}(B_1)=1\), using Taylor's expansion of the power
\(\frac1{j+1}\), we get
\begin{equation*}
\begin{aligned}
H_{j-1}^{\frac1{j+1}}
&=
1
-
t
\left[
\frac{j-1}{j+1}V
+
\frac{j-1}{(j+1)(n-1)}\Delta_\xi V
\right]
\\
&+
\frac{t^2}{2}
\Bigg[
\frac{2j(j-1)}{(j+1)^2}V^2
-
\frac{j-1}{j+1}A
-
\frac{j-1}{(j+1)(n-1)}\Delta_\xi A
+
\frac{4j(j-1)}{(j+1)^2(n-1)}V\Delta_\xi V
\\
&
+
\frac{2}{j+1}
\frac{\binom{n-3}{j-3}}{\binom{n-1}{j-1}}
S_2(D^2V)
-
\frac{(j-1)(n-3)}{(j+1)(n-1)}
|\nabla_\xi V|^2
-
\frac{j(j-1)^2}{(j+1)^2(n-1)^2}
(\Delta_\xi V)^2
\Bigg]
+
o(t^2).
\end{aligned}
\end{equation*}
We recall that
\begin{equation*}
r^{n-2}
=
1
+
t(n-2)V
+
\frac{t^2}{2}
\left[
(n-2)A
+
(n-2)(n-3)V^2
\right]
+
o(t^2),
\end{equation*}
and
\begin{equation*}
\bigl(r^2+|\nabla_\xi r|^2\bigr)^{1/2}
=
1
+
tV
+
\frac{t^2}{2}
\left[
A
+
|\nabla_\xi V|^2
\right]
+
o(t^2).
\end{equation*}
Hence
\begin{equation*}
r^{n-2}
\bigl(r^2+|\nabla_\xi r|^2\bigr)^{1/2}
=
1
+
t(n-1)V
+
\frac{t^2}{2}
\left[
(n-1)A
+
(n-1)(n-2)V^2
+
|\nabla_\xi V|^2
\right]
+
o(t^2).
\end{equation*}

Multiplying the expansion of \(H_{j-1}^{1/(j+1)}\) by the expansion of
the surface element and integrating, we obtain
\begin{equation*}
\begin{aligned}
&\int_{\partial B_1}H_{j-1}^{\frac1{j+1}}
\,d\mathcal H^{n-1}_{\partial\Omega(t)}
=
\int_{\partial B_1}\Bigg\{
1
+
t
\left[
\frac{nj+n-2j}{j+1}V
-
\frac{j-1}{(j+1)(n-1)}\Delta_\xi V
\right]
\\
&\quad
+
\frac{t^2}{2}
\Bigg[
\frac{(nj+n-2j)(nj+n-3j-1)}{(j+1)^2}V^2
+
\frac{nj+n-2j}{j+1}A
\\
&\qquad
-
\frac{j-1}{(j+1)(n-1)}\Delta_\xi A
-
\frac{2(j-1)(nj+n-3j-1)}
{(j+1)^2(n-1)}
V\Delta_\xi V
\\
&\qquad
+
\frac{2}{j+1}
\frac{\binom{n-3}{j-3}}{\binom{n-1}{j-1}}
S_2(D^2V)
+
\frac{2(n+j-2)}{(j+1)(n-1)}
|\nabla_\xi V|^2
-
\frac{j(j-1)^2}
{(j+1)^2(n-1)^2}
(\Delta_\xi V)^2
\Bigg]
\Bigg\}
d\mathcal H^{n-1}_\xi
+
o(t^2).
\end{aligned}
\end{equation*}
Using \eqref{Green-Beltrami}-\eqref{S_2}, and recalling that
\begin{equation*}
\int_{\partial B_1}
V\Delta_\xi V\,d\mathcal H^{n-1}_\xi
=
-
\int_{\partial B_1}
|\nabla_\xi V|^2\,d\mathcal H^{n-1}_\xi.
\end{equation*}
we finally get
\begin{equation*}
\begin{aligned}
\int_{\partial\Omega(t)}
H_{j-1}^{\frac1{j+1}}\,d\mathcal H^{n-1}
&=
n\omega_n
+
t\,\frac{nj+n-2j}{j+1}
\int_{\partial B_1}
V\,d\mathcal H^{n-1}_\xi
\\
&
+
\frac{t^2}{2}
\int_{\partial B_1}
\Bigg[
\frac{(nj+n-2j)(nj+n-3j-1)}{(j+1)^2}V^2
+
\frac{nj+n-2j}{j+1}A
\\
&
+
\frac{j\bigl(j^2+2jn-6j+2n+1\bigr)}
{(j+1)^2(n-1)}
|\nabla_\xi V|^2
-
\frac{j(j-1)^2}
{(j+1)^2(n-1)^2}
(\Delta_\xi V)^2
\Bigg]
\,d\mathcal H^{n-1}_\xi
+
o(t^2).
\end{aligned}
\end{equation*}
\end{proof}
Now we are in position to prove Theorem \ref{teo_curvature_sella}.
\begin{proof}[proof of Theorem \ref{teo_curvature_sella}]
We want to write each term in \eqref{IntCurvExp} in terms of spherical harmonic functions. We start by writing the normal velocity on $\p B_1$ as
\begin{equation*}
        V(\xi)=\sum_{\ell\ge0}a_\ell Y_\ell(\xi),
        \qquad \|Y_\ell\|_{L^2(\p B_1)}=1,
\end{equation*}
where
\begin{equation}\label{propHarm}
        \Delta_{\xi}Y_\ell(\xi)=-\ell(\ell+n-2)Y_\ell(\xi),
\end{equation}
and
\[
        a_\ell=\int_{\partial B_1}V(\xi)Y_\ell(\xi)\,d\mathcal{H}^{n-1}_\xi.
\]
The quermassintegral constraint \eqref{eq:I} gives
\begin{equation*}
        a_0=0.
\end{equation*}
Moreover,
\begin{equation}\label{eq:phisquare}
        \int_{\p B_1}V(\xi)^2=\sum_{\ell\ge0}a_\ell^2.
\end{equation}
From \eqref{propHarm} we can infer the following
\begin{equation*}
    \int_{\partial B_1}|\nabla_\xi V|^2\,d\mathcal{H}^{n-1}=-\int_{\partial B_1}V\Delta_\xi V\,d\mathcal{H}^{n-1}=\sum_{\ell\geq0}a_\ell^2\ell(\ell+n-2).
\end{equation*}
Combining the last two identity, from \eqref{eq:II}, we find
\begin{equation}\label{eq:Aintegral}
\begin{aligned}
        \int_{\p B_1}A(\xi)\, d\mathcal{H}^{n-1}
        =&-\int_{\p B_1}\left[(n-j-1)V(\xi)^2+\frac{j}{n-1}\abs{\nabla_\xi V}^2\right]\,d\Hh^{n-1}\\
        =& -\sum_{\ell\ge 0} a_\ell^2 \left((n-j-1)+\frac{j}{n-1}\ell(\ell+n-2)\right)
        \end{aligned}
\end{equation}
    Using the spherical harmonics expansion given above in \eqref{IntCurvExp} we get
    \begin{equation*}
        \begin{split}
            \int_{\partial\Om(t)}H_{j-1}^\frac{1}{j+1}\,d\Hh^{n-1}\bigg|_{t=0}=&\sum_{\ell\geq0}a_\ell^2\bigg[\frac{(nj+n-2j)(nj+n-3j-1)}{(j+1)^2}\\
            &-\frac{nj+n-2j}{j+1}\left((n-i-1)+\frac{i}{n-1}\ell(\ell+n-2)\right)\\
            &+\frac{j\bigl(j^2+2jn-6j+2n+1\bigr)}
            {(j+1)^2(n-1)}\ell(\ell+n-2)\\
            &-\frac{j(j-1)^2}
{(j+1)^2(n-1)^2}\ell^2(\ell+n-2)^2\bigg],
        \end{split}
    \end{equation*}
    where the $i$-th quermassintegral is prescribed.
    Then, writing everything in a more compact form, we have
    \begin{equation*}
        \begin{split}
            \int_{\partial\Om(t)}H_{j-1}^\frac{1}{j+1}\,d\Hh^{n-1}\bigg|_{t=0}=&\sum_{\ell\geq0}a_\ell^2f_{n,j,i}(\ell).
        \end{split}
    \end{equation*}
    In particular we are interested in the case $i=0$, namely when the measure is prescribed. In this case we have
    \begin{equation*}
        \begin{split}  
            f_{n,j,0}(\ell)=&-\frac{j(j-1)^2}{(j+1)^2(n-1)^2}\ell^4
-\frac{2j(j-1)^2(n-2)}{(j+1)^2(n-1)^2}\ell^3\\
&-\frac{
j^3 n^2-5j^3 n+5j^3
-4j^2 n^2+16j^2 n-14j^2
-jn^2-3jn+5j
}
{(j+1)^2(n-1)^2}\ell^2\\
&+\frac{(n-2)\bigl(
j^3+2j^2n-6j^2+2jn+j
\bigr)}
{(j+1)^2(n-1)}\ell
-\frac{2j(jn-2j+n)}{(j+1)^2}.
        \end{split}
    \end{equation*}
    It is easy to see that $f_{n,j,0}(2)>0$, while exists $\overline{\ell}\geq3$ such that $f_{n,j,0}(\overline{\ell})<0$ for all $j\not=1,n$. In the case $j=n$ we have $f_{n,n,0}(2)=0$, accordingly to the affine-invariance of the functional, and $f_{n,n,0}(\ell)>0$ for all $\ell>2$, accordingly to \eqref{AffIsopIneq}.
\end{proof}

\section{Shape derivative of the $k$-Torsional rigidity}\label{sec:prove}
We now want to compute the first and the second derivatives of the Torsion problem

\begin{equation}\label{eq:P}
\begin{cases}
S_k(D^2u)=\displaystyle \frac1k\bigl(S_k^{ij}(D^2u)u_j\bigr)_i=\Binom{n}{k} &\text{in }\Om(t),\\[0.4em]
 u=0 &\text{on }\p\Om(t).
\end{cases}
\tag{P}
\end{equation}
where we denote $u=u(x,t)$.
To simplify the notation, we omit to specify the variables, so when $\displaystyle{\frac{\partial \Phi}{\partial t}}$ appears, one has to intend  $\displaystyle{\frac{\partial \Phi(\Phi^{-1}(x))}{\partial t}}$, with $x\in\Omega(t)$.

\vspace{2mm}
Differentiating \eqref{eq:P} once with respect to $t$ gives
\begin{equation}\label{eq:Pt}
\begin{cases}
\bigl(S_k^{ij}(D^2u)u_{tj}\bigr)_i=0 &\text{in }\Om(t),\\[1ex]
\displaystyle{ u_t+\grad u\cdot \frac{\partial \Phi}{\partial t}}=0 &\text{on }\p\Om(t).
\end{cases}
\tag{$P_t$}
\end{equation}
Differentiating twice gives
\begin{equation}\label{eq:Ptt}
\begin{cases}
\bigl(S_k^{ij}(D^2u)\bigr)_t u_{tij}+\bigl( S_k^{ij}(D^2u)u_{ttj}\bigr)_i=0
&\text{in }\Om(t),\\[2ex]
\displaystyle{ u_{tt}+\left(\dfrac{\partial\Phi}{\partial t}\right)^T D^2u\,\frac{\partial\Phi}{\partial t}+\grad u\cdot \frac{\partial^2\Phi}{\partial t^2}+2\grad u_t\cdot \frac{\partial\Phi}{\partial t}=0}
&\text{on }\p\Om(t).
\end{cases}
\tag{$P_{tt}$}
\end{equation}

On the boundary, \eqref{eq:Pt} gives
\begin{equation*}
        V\big|_{\p\Om(t)}=-\frac{u_t}{|\grad u|}\,\nuout .
\end{equation*}
In particular, at $t=0$, 
\begin{equation}\label{eq:ut-boundary}
        u_t\big|_{\p B_1}=-V .
\end{equation}

We also use the Hadamard formula

\begin{equation}\label{eq:Hadamard}
    \frac{d}{dt}\int_{\Phi(t,\Om)}f\,dx=\int_{\Phi(t,\Om)}\left[f_t+\mathrm{div}\left(f\frac{\partial\Phi}{\partial t}\right)\right]\,dx.
\end{equation}

We are now in position to prove that the ball is a critical point for the $k$-Torsional rigidity when prescribing any quermassintegral $W_j$.
\begin{proof}[proof of Theorem \ref{derivata1}]
Consider the functional
\begin{equation*}
        T_k(t)
        :=\frac{\left(\displaystyle\int_{\Om(t)}-u\,dx\right)^{k+1}}
        {\displaystyle\int_{\Om(t)}-u\,S_k(D^2u)\,dx}.
\end{equation*}
Using the equation in \eqref{eq:P}, the last becomes
\begin{equation*}
        T_k(t)=\Binom{n}{k}^{-1}
        \left(\int_{\Om(t)}-u\,dx\right)^k .
\end{equation*}
From \eqref{eq:Hadamard} and $u=0$ on $\p\Om(t)$, we can compute the first shape derivative of the $k$-torsional rigidity as follows
\begin{equation}
   \label{der_tor}
   \begin{aligned}
       T_k'(t)
&=k\Binom{n}{k}^{-1}\left(\int_{\Om(t)}-u\,dx\right)^{k-1}
        \int_{\Om(t)}-u_t -\text{div}\left(u\frac{\partial\Phi}{\partial t}\right)\, dx\\
        &=k\Binom{n}{k}^{-1}\left(\int_{\Om(t)}-u\,dx\right)^{k-1}
        \int_{\Om(t)}-u_t\,dx.
   \end{aligned}
\end{equation}

Since
\[
        1=\frac{1}{k\Binom{n}{k}}\bigl(S_k^{ij}(D^2u)u_j\bigr)_i,
\]
we get
\begin{align*}
T_k'(t)
&=\Binom{n}{k}^{-2}\left(\int_{\Om(t)}-u\,dx\right)^{k-1}
 \int_{\Om(t)}-u_t\bigl(S_k^{ij}(D^2u)u_j\bigr)_i\,dx \\
&=\Binom{n}{k}^{-2}\left(\int_{\Om(t)}-u\,dx\right)^{k-1}
 \int_{\p\Om(t)}\frac{-u_tS_k^{ij}(D^2u)u_iu_j}{|\grad u|}\,d\Hh^{n-1}
 \\
&\quad+\Binom{n}{k}^{-2}\left(\int_{\Om(t)}-u\,dx\right)^{k-1}
 \int_{\Om(t)} u_jS_k^{ij}(D^2u)u_{ti}\,d\Hh^{n-1}.
\end{align*}
The last volume term vanishes by \eqref{eq:Pt} after integration by parts, because
$u=0$ on $\p\Om(t)$. Moreover, on $\p\Om(t)$, \eqref{hksk} is in force, 
\begin{equation*}
        \frac{S_k^{ij}(D^2u)u_iu_j}{|\grad u|}
        =\Binom{n-1}{k-1}H_{k-1}|\grad u|^k ,
\end{equation*}
hence,
\begin{align}\label{eq:Tprime}
T_k'(t)
&=\Binom{n-1}{k-1}\Binom{n}{k}^{-2}
 \left(\int_{\Om(t)}-u\,dx\right)^{k-1}
 \int_{\p\Om(t)}-u_tH_{k-1}|\grad u|^k\,d\Hh^{n-1} \\
&=\frac{k}{n}\Binom{n}{k}^{-1}
 \left(\int_{\Om(t)}-u\,dx\right)^{k-1}
 \int_{\p\Om(t)}H_{k-1}|\grad u|^{k+1}V\,d\Hh^{n-1} . \nonumber
\end{align}
On  $\p B_1$, $H_{k-1}$ and $|\grad u|$ are constant, hence
\[
        T_k'(0)=C\int_{\p B_1}V\,d\Hh^{n-1}=0
\]
by \eqref{eq:I}. Thus the ball is a critical shape.
\end{proof}

In order to prove Theorem \ref{teo_sella}, we compute the second shape derivative of $T_k$.
\begin{lemma}[Second derivative]
    Let $\Phi:[0,\delta)\times \mathbb{R}^n\rightarrow\mathbb{R}^n$ be a family of transformations satisfying, for $\delta>0$ small enough, $(i)-(iv)$. Then, it holds
    \begin{equation*}
        \begin{split}
            T_k''(t)=&\frac{k(k-1)}{n^2}\Binom{n}{k}^{-1}
            \left(\int_{\Om(t)}-u\,dx\right)^{k-2}
            \left(\int_{\p\Om(t)}H_{k-1}|\grad u|^{k+1}V\,d\Hh^{n-1}\right)^2\\
            &+k\Binom{n}{k}^{-1}\left(\int_{\Om(t)}-u\,dx\right)^{k-1}
            \Bigg\{
            \int_{\p\Om(t)}|\grad u|\,V^2 \,d\Hh^{n-1}+\frac1k\Binom{n}{k}^{-1}
            \int_{\Om(t)}\bigl(S_k^{ij}(D^2u)\bigr)_t u_{tij}\,u\,dx  \nonumber\\
            &+\frac1n\int_{\p\Om(t)}H_{k-1}|\grad u|^k
            \left[\left(\frac{\partial\Phi}{\partial t}\right)^TD^2u\,\frac{\partial\Phi}{\partial t}+\grad u\cdot \frac{\partial^2\Phi}{\partial t^2}+2\grad u_t\cdot \frac{\partial\Phi}{\partial t}\right]\,d\Hh^{n-1}
            \Bigg\}.
        \end{split}
    \end{equation*}
    Moreover, when $t=0$, we get
    \begin{equation}\label{T''(0)}
        \begin{split}
            T_k''(0)=&\frac{k(k-1)}{n^2}\Binom{n}{k}^{-1}
            \left(\int_{B_1}-u\,dx\right)^{k-2}
            \left(\int_{\p B_1}V\,d\Hh^{n-1}\right)^2\\
            &+k\Binom{n}{k}^{-1}\left(\int_{B_1}-u\,dx\right)^{k-1}
            \Bigg\{
            \int_{\p B_1}V^2 \,d\Hh^{n-1}+\frac1k\Binom{n}{k}^{-1}
            \int_{B_1}\bigl(S_k^{ij}(D^2u)\bigr)_t u_{tij}\,u\bigg|_{t=0}\,dx \\
            &+\frac1n\int_{\p B_1}
            \left[V^2+A+2V\frac{\partial u}{\partial \nu}\bigg|_{t=0}\right]\,d\Hh^{n-1}
            \Bigg\}.
        \end{split}
    \end{equation}
\end{lemma}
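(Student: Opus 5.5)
The plan is to differentiate once more the expression for $T_k'(t)$ obtained in the proof of Theorem \ref{derivata1}, rather than its final boundary form. Write $T_k'(t)=k\binom{n}{k}^{-1}I(t)^{k-1}J(t)$ with $I(t)=\int_{\Om(t)}-u\,dx$ and $J(t)=\int_{\Om(t)}-u_t\,dx$. The product rule gives
\[
T_k''(t)=k(k-1)\binom{n}{k}^{-1}I^{k-2}J^2+k\binom{n}{k}^{-1}I^{k-1}J'(t).
\]
In the first term, $J$ is replaced by the boundary integral identified in \eqref{eq:Tprime}, namely $J=\frac1n\int_{\p\Om(t)}H_{k-1}|\grad u|^{k+1}V$. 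This produces the squared term with constant $k(k-1)/n^2$.

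For $J'(t)$, I apply the Hadamard formula \eqref{eq:Hadamard} to $f=-u_t$:
\[
J'=\int_{\Om(t)}-u_{tt}\,dx-\int_{\p\Om(t)}u_t\,\tfrac{\partial\Phi}{\partial t}\cdot\nu\,d\Hh^{n-1}.
\]
On the boundary, $u_t=-\grad u\cdot\frac{\partial\Phi}{\partial t}$ and the velocity is normal, so $u_t=-|\grad u|\,V$ up to the sign convention for $\grad u\cdot\nu=|\grad u|$. The second term therefore becomes $\int_{\p\Om(t)}|\grad u|V^2$. For the volume term, I insert $1=\frac{1}{k\binom nk}(S_k^{ij}u_j)_i$, as in the first-derivative proof, and integrate by parts twice. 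I move the divergence from $S_k^{ij}u_j$ onto $u_{tt}$; since $u=0$ on the boundary, only the $S_k^{ij}u_iu_j$ boundary term survives from the first integration. Then I move derivatives back onto $u$ using the symmetry of $S_k^{ij}$. The interior term becomes $\int (S_k^{ij}u_{ttj})_i\,u$, and by \eqref{eq:Ptt} this equals $-\int (S_k^{ij})_tu_{tij}\,u$. The boundary term $\int_{\p\Om}-u_{tt}S_k^{ij}u_iu_j/|\grad u|$ is rewritten through \eqref{hksk} as $\binom{n-1}{k-1}\int H_{k-1}|\grad u|^k(-u_{tt})$. Substituting $u_{tt}$ from the boundary condition of \eqref{eq:Ptt} gives the bracket $\big[(\partial_t\Phi)^TD^2u\,\partial_t\Phi+\grad u\cdot\partial_t^2\Phi+2\grad u_t\cdot\partial_t\Phi\big]$. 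The identity $\binom{n-1}{k-1}\binom nk^{-1}=k/n$ then reproduces the factor $\frac1n$.

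The evaluation at $t=0$ is routine. On $B_1$ the solution is $u=(|x|^2-1)/2$, so $|\grad u|=1$, $H_{k-1}=1$, $D^2u=I$ and $\grad u=\nu$. This gives $(\partial_t\Phi)^TD^2u\,\partial_t\Phi=V^2$ (normal velocity), $\grad u\cdot\partial_t^2\Phi=A$, and $\grad u_t\cdot\partial_t\Phi=V\,\partial_\nu u_t$. The last expression is what the statement denotes by $V\frac{\partial u}{\partial\nu}\big|_{t=0}$ in the time-differentiated sense.

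The main obstacle is bookkeeping of signs and constants in the double integration by parts, in particular:
\begin{itemize}
\item correctly tracking which boundary terms vanish (those containing $u$) and which survive;
\item justifying that the divergence-free property of the rows of $S_k^{ij}(D^2u)$ allows $(S_k^{ij}u_{ttj})_i=S_k^{ij}u_{ttij}$;
\item consistently using the normal-velocity assumption so that tangential components of $\partial_t\Phi$ do not contribute.
\end{itemize}
I would check the final constants against the case $k=1$, where the formula must reduce to the known second shape derivative of classical torsion.
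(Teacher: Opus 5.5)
Your plan follows the paper's proof step by step. You use the product rule on $T_k'=k\binom nk^{-1}I^{k-1}J$ (with $I'=J$ because $u=0$ on $\partial\Omega(t)$), Hadamard's formula for $J'$, and the insertion of $1=\frac{1}{k\binom nk}(S_k^{ij}u_j)_i$ followed by two integrations by parts. You then apply \eqref{eq:Ptt}, \eqref{hksk} and $\binom{n-1}{k-1}/(k\binom nk)=1/n$, and evaluate on $B_1$, correctly reading $2V\frac{\partial u}{\partial\nu}\big|_{t=0}$ as $2V\,\partial_\nu u_t$. The constants $k(k-1)/n^2$ and $1/n$ come out right.

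There is, however, a sign slip in the second integration by parts, and as written it flips the sign of the Newton-tensor term. After the first integration by parts the interior term is $+\int_{\Omega(t)}S_k^{ij}u_{tti}u_j\,dx$. Moving the derivative off $u_j$ produces a minus sign:
\[
\int_{\Omega(t)}S_k^{ij}(D^2u)\,u_{tti}\,u_j\,dx=\int_{\partial\Omega(t)}S_k^{ij}(D^2u)\,u_{tti}\,\nu_j\,u\,d\mathcal H^{n-1}-\int_{\Omega(t)}\bigl(S_k^{ij}(D^2u)\,u_{tti}\bigr)_j\,u\,dx=-\int_{\Omega(t)}\bigl(S_k^{ij}(D^2u)\,u_{ttj}\bigr)_i\,u\,dx .
\]
Only then does \eqref{eq:Ptt} turn this into $+\int_{\Omega(t)}\bigl(S_k^{ij}(D^2u)\bigr)_t\,u_{tij}\,u\,dx$, which is the sign in the statement. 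You wrote that the interior term "becomes $\int(S_k^{ij}u_{ttj})_i\,u$" and then "equals $-\int(S_k^{ij})_tu_{tij}\,u$". That chain ends with $-\frac1k\binom nk^{-1}\int(S_k^{ij})_tu_{tij}\,u$, the opposite of what you are asked to prove.

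This sign is not cosmetic. For $k=2$ at $t=0$ one has $(S_2^{ij})_tu_{tij}=(\Delta u_t)^2-|D^2u_t|^2=-|D^2u_t|^2$, so the correct term is nonnegative because $u\le 0$. It is this term that later supplies the contribution $\frac{k-1}{n(n-1)}(\ell^2-\ell)$ to $f_{n,k,j}$. With your sign, the affine-invariance check $f_{n,n,0}(2)=0$ would fail.

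Your proposed sanity check at $k=1$ cannot catch this. There $S_1^{ij}=\delta_{ij}$ is constant, so the whole Newton-tensor term vanishes identically; you need a check with $k\ge 2$.

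Also, the divergence-free property of the rows of $S_k^{ij}$ that you list as an obstacle is not needed here, since \eqref{eq:Ptt} is already stated in divergence form.
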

\begin{proof}
We want to compute $T_k''(t)$. 
From \eqref{eq:Tprime}, we just need to compute the following 
\begin{align*}
I_k''(t)
&=k\Binom{n}{k}^{-1}\left(\int_{\Om(t)}-u\,dx\right)^{k-1}
        \frac{d}{dt}\int_{\Om(t)}-u_t\,dx,
\end{align*}
and this can be done by differentiating the expression \eqref{der_tor}, obtaining
 \begin{equation}\label{eq:Tsecond-start}
I''_k(t)=k\Binom{n}{k}^{-1}\left(\int_{\Om(t)}-                 u\,dx\right)^{k-1}\int_{\Om(t)}\left[-u_{tt}-\Div\left(u_t\frac{\partial\Phi}{\partial t}\right)\right]\,dx. 
 \end{equation}

The divergence term in \eqref{eq:Tsecond-start} is
\begin{equation*}
        \int_{\Om(t)}-\Div\left(u_t\frac{\partial\Phi}{\partial t}\right)\,dx
        =\int_{\p\Om(t)}-u_tV\,d\Hh^{n-1}
        =\int_{\p\Om(t)}|\grad u|\,V^2\,d\Hh^{n-1} .
\end{equation*}
For the term involving $u_{tt}$, applying the divergence Theorem and \eqref{hksk},
\begin{align*}
\int_{\Om(t)}-u_{tt}\,dx
&=\frac{1}{k}\Binom{n}{k}^{-1}
\int_{\Om(t)}-u_{tt}\bigl(S_k^{ij}(D^2u)u_j\bigr)_i\,dx \\
&=\frac{1}{k}\Binom{n}{k}^{-1}\Binom{n-1}{k-1}
\int_{\p\Om(t)}-u_{tt}H_{k-1}|\grad u|^k\,d\Hh^{n-1}\quad+\frac{1}{k}\Binom{n}{k}^{-1}
\int_{\Om(t)}S_k^{ij}(D^2u) u_{tti}\,u_j\,dx.
\end{align*}
If we integrate by part and we recall the equation in \eqref{eq:Ptt}, we have 
$$\begin{aligned}
\int_{\Om(t)}S_k^{ij}(D^2u) u_{tti}\,u_j \,dx&=\int_{\partial\Om(t)}S_k^{ij}(D^2u) u_{tti}\nu_j u\,d\Hh^{n-1} -\int_{\Om(t)}\left(S_k^{ij}(D^2u) u_{tti}\right)_j u\,dx\\
&=\int_{\Om(t)} \left(S_k^{ij}(D^2u)\right)_t u_{tij} u\,dx.
\end{aligned}$$
Using the boundary condition in \eqref{eq:Ptt}, namely
\[
        -u_{tt}=\left(\frac{\partial\Phi}{\partial t}\right)^TD^2u\,\frac{\partial\Phi}{\partial t}+\grad u\cdot \frac{\partial^2\Phi}{\partial t^2}+2\grad u_t\cdot \frac{\partial\Phi}{\partial t}
        \qquad\text{on }\p\Om(t),
\]
and the identity $\Binom{n-1}{k-1}/(k\Binom nk)=1/n$, we arrive at
\begin{align*}
I_k''(t)
&=k\Binom{n}{k}^{-1}\left(\int_{\Om(t)}-u\,dx\right)^{k-1}
\Bigg\{
\int_{\p\Om(t)}|\grad u|\,V^2\,d\Hh^{n-1} \\
&\quad+\frac1k\Binom{n}{k}^{-1}
\int_{\Om(t)}\bigl(S_k^{ij}(D^2u)\bigr)_t u_{tij}\,u \,dx \nonumber\\
&\quad+\frac1n\int_{\p\Om(t)}H_{k-1}|\grad u|^k
\left[\left(\frac{\partial\Phi}{\partial t}\right)^TD^2u\,\frac{\partial\Phi}{\partial t}+\grad u\cdot \frac{\partial^2\Phi}{\partial t^2}+2\grad u_t\cdot \frac{\partial\Phi}{\partial t}\right]\,d\Hh^{n-1}
\Bigg\}. \nonumber
\end{align*}
This complete the first part of the statement.\\ Now we want to evaluate $T_k''$ at $t=0$.
We recall that, at $t=0$, the solution to \eqref{eq:P} is 
\begin{equation*}
        u(x)=\frac{|x|^2-1}{2},\qquad D^2u=\mathbb{I},
        \qquad |\grad u|=1\quad\text{on }\p B_1.
\end{equation*}
Moreover,
\begin{equation*}
    H_{k-1}\big|_{\p B_1}=1.
\end{equation*}
Since $D^2u=\mathbb{I}$, on $\p B_1$, 
\begin{equation*}
        \left(\frac{\partial\Phi}{\partial t}\right)^TD^2u\,\frac{\partial\Phi}{\partial t}\bigg|_{t=0}=V^2 .
\end{equation*}

Also, using \eqref{eq:ut-boundary},
\begin{equation*}
        2\grad u_t\cdot \frac{\partial\Phi}{\partial t}
        =2V\frac{\partial u_t}{\partial\nuout}
        =-2u_t\frac{\partial u_t}{\partial\nuout}
        \qquad\text{on }\p B_1.
\end{equation*}
At $t=0$, the differentiated equation \eqref{eq:Pt} reduces to
\begin{equation*}
        \Delta u_t=0\qquad\text{in }B_1.
\end{equation*}
This concludes the proof.
\end{proof}

\begin{proof}[proof of Theorem \ref{teo_sella}]
We introduce for $x\equiv (r,\xi)\in\Omega$
\begin{equation*}
    v(r,\xi)=u_t(x),
\end{equation*}
Now we need an expression for $v(r,\xi)$. We observe that the equation for $u_t$ is $S_k^{ij}(D^2u)u_{tij}=0$ and, at $t=0$, since $S_k^{ij}(D^2u)=\delta_{ij}$, becomes $\Delta u_t=0$. Then in spherical coordinates we have
\begin{equation*}
    \begin{cases}
        -\frac{1}{r^{n-1}}\frac{\partial}{\partial r}\left(r^{n-1}\frac{\partial v}{\partial r}\right)-\frac{1}{r^2}\Delta_\xi v=0 & (r,\xi)\in(0,1)\times \partial B_1\\
        v(1,\xi)=-V(\xi) & \xi\in \partial B_1.
    \end{cases}
\end{equation*}
Then, with the separation of variables
\begin{equation}\label{varsep}
    v(r,\xi)=\sum_{\ell=0}^{+\infty}R_\ell(r)Y_\ell(\xi),
\end{equation}
the equation becomes
\begin{equation*}
    -\frac{n-1}{r}\frac{\partial v}{\partial r}-\frac{\partial^2 v}{\partial r^2}-\frac{1}{r^2}\Delta_\xi v=0,
\end{equation*}
that is from \eqref{varsep}
\begin{equation*}
    \sum_{\ell=0}^{+\infty} \left[-\frac{n-1}{r}R'_\ell(r)Y_\ell(\xi)-R''_\ell(r)Y_\ell(\xi)-\frac{1}{r^2}R_\ell(r)\Delta_\xi Y_\ell(\xi) \right]=0.
\end{equation*}
Using \eqref{propHarm} we get
\begin{equation*}
    \sum_{\ell=0}^{+\infty} \left[-\frac{n-1}{r}R'_\ell(r)-R''_\ell(r)+\frac{\ell(\ell+n-2)}{r^2}R_\ell(r) \right]Y_\ell(\xi)=0,
\end{equation*}
but, since $\{Y_\ell\}_\ell$ is a basis of $L^2$, then we have that, for all $\ell\geq 0$,
\begin{equation*}
    \begin{cases}
        &R''_\ell(r)+\frac{n-1}{r}R'_\ell(r)-\frac{\ell(\ell+n-2)}{r^2}R_\ell(r)=0,\\
        &R_\ell(1)=-a_\ell,\\
        &R'_\ell(0)=0.
    \end{cases}
\end{equation*}
Explicitly solving the ODE, we arrive to
\begin{equation*}
    R_\ell(r)=c_1r^\ell+c_2r^{2-(n+\ell)},
\end{equation*}
with $c_1$, $c_2$ to be determined. Since we are interested in solutions that are $L^2$, we choose $c_2=0$ and then, from $R_\ell(1)=-a_\ell$ we find
\begin{equation*}
    R_\ell(r)=-a_\ell r^\ell.
\end{equation*}
Now we are interested in the radial derivative of $v(r,\xi)$, evaluated in $r=1$, i.e.,
\begin{equation*}
    \frac{\partial v}{\partial r}(r,\xi)\bigg|_{r=1}=\sum_{\ell=0}^{+\infty} R'_\ell(1)Y_\ell(\xi)=-\sum_{\ell=0}^{+\infty}\ell a_\ell Y_\ell(\xi),
\end{equation*}
Therefore
\begin{equation*}
        \int_{\p B_1}2V\frac{\partial u}{\partial \nu}\bigg|_{t=0}\,d\Hh^{n-1}
        =-2\sum_{\ell\ge0}\ell a_\ell^2.
\end{equation*}
Combining the previous identity with \eqref{eq:phisquare}-\eqref{eq:Aintegral}, we find that the purely boundary contribution in \eqref{T''(0)} is
\begin{align}\label{eq:boundary-contribution}
&\int_{\p B_1}V^2 \,d\Hh_\xi^{n-1}
+\frac1n\int_{\p B_1}\bigl[V^2+A+2\grad u_t\cdot \frac{\partial\Phi}{\partial t}\bigr] \\
&\qquad
=\sum_{\ell\ge0}a_\ell^2\left(\frac{j+2}{n}-\frac{j(n-2)+2(n-1)}{n(n-1)}\ell-\frac{j}{n(n-1)}\ell^2\right). \nonumber
\end{align}
The last term that we need to write in terms of spherical harmonics in \eqref{T''(0)} is the Newton-tensor term. We start by recalling the identity \eqref{cofproof}
\begin{equation*}
    S_k^{ij}(A)=[T_{k-1}]_i^j(A),
\end{equation*}
that in our case is
\begin{equation*}
    S_k^{ij}(D^2u)=[T_{k-1}]_i^j(D^2u)=\frac{1}{(k-1)!}\delta_{jj_1\dots j_{k-1}}^{ii_1\dots i_{k-1}}u_{i_1j_1}\dots u_{i_{k-1}j_{k-1}}.
\end{equation*}
Then
\begin{equation*}
    \begin{split}
        (S_k^{ij}(D^2u))_tu_{ijt}&=\left([T_{k-1}]_i^j(D^2u)\right)_tu_{ijt}=\frac{1}{(k-1)!}\delta_{jj_1\dots j_{k-1}}^{ii_1\dots i_{k-1}}(u_{i_1j_1}\dots u_{i_{k-1}j_{k-1}})_tu_{ijt}\\
        &=\frac{k-1}{(k-1)!}\delta_{jj_1\dots j_{k-1}}^{ii_1\dots i_{k-1}}(u_{i_1j_1})_tu_{i_{2}j_{2}}\dots u_{i_{k-1}j_{k-1}}u_{ijt}.
    \end{split}
\end{equation*}
We remark that on the ball $\Omega(0)=B_1$ the solution $u$ to \eqref{torsDirich} satisfies $u_{ij}=\delta_{ij}$. Then, iterating \eqref{contr2}, we have
\begin{equation*}
    \delta^{ii_1\dots i_{k-1}}_{jj_1\dots j_{k-1}}\delta_{i_2}^{j_2}\dots\delta^{j_{k-1}}_{i_{k-1}}=(n-k+1)(n-k+2)\dots(n-2)\delta_{jj_1}^{ii_1}
\end{equation*}

Then, on $B_1$, we have
\begin{equation*}
    (S_k^{ij}(D^2u))_tu_{ijt}=\frac{(n-k+1)(n-k+2)\dots(n-2)}{(k-2)!}\delta_{ii_1}^{jj_1}(u_{i_1j_1})_t(u_{ij})_t=\binom{n-2}{k-2}\left[(\Delta u_t)^2-\sum_{ij}u_{ijt}^2\right].
\end{equation*}
Since $u_t$ is harmonic in $B_1$, applying the divergence Theorem, we find
\begin{equation*}
    \binom{n-2}{k-2}^{-1}\int_{B_1}(S_k^{ij}(D^2u))_tu_{tij}u\,dx=-\sum_{i,j}\int_{B_1}u^2_{tij}u\,dx=\int_{B_1}u_{tij}u_ju_{ti}\,dx=\frac{1}{2}\int_{B_1} (u_{ti}^2)_j u_j\,dx.
\end{equation*}
Now applying again the divergence Theorem, since on $\partial B_1$ it holds $|\nabla u|=1$ and $\Delta u=n$, 
\begin{equation}\label{Snesp}
    \binom{n-2}{k-2}^{-1}\int_{B_1}(S_k^{ij}(D^2u))_tu_{tij}u\,dx=\frac{1}{2}\sum_i\int_{\partial B_1}u_{ti}^2\,d\mathcal{H}^{n-1}-\frac{n}{2}\sum_i\int_{B_1}u^2_{ti}\,dx.
\end{equation}
The second term in the last equality, integrating by parts, is
\begin{equation}\label{secter}
\begin{split}
    \sum_i\int_{B_1}u_{ti}^2\,dx&=\int_{\partial B_1}u_t\frac{\partial u_t}{\partial\nu}\,d\mathcal{H}^{n-1}=-\int_{\partial B_1}\frac{\partial u_t}{\partial\nu}V\,d\mathcal{H}^{n-1}\\
    &=-\int_{\partial B_1}\frac{\partial v}{\partial r}(r,\xi)\bigg|_{r=1}V(\xi)\,d\mathcal{H}^{n-1}_\xi=\sum_{\ell\geq 0}a_\ell^2\ell,
\end{split}    
\end{equation}
where, again, we have used $\Delta u_t=0$.
For the first term in \eqref{Snesp} we recall the following integration by parts formula (see \cite[Theorem 5.4.13]{HP})
\begin{equation}\label{intXpar}
    \int_{\partial\Omega}\nabla f\cdot \nabla g\,d\mathcal{H}^{n-1}=-\int_{\partial\Omega}f\Delta g\,d\mathcal{H}^{n-1}+\int_{\partial\Omega}\left(\frac{\partial f}{\partial\nu} \frac{\partial g}{\partial\nu}+f\frac{\partial^2 g}{\partial\nu^2}+\tilde H_1f\frac{\partial g}{\partial\nu}\right)\,d\mathcal{H}^{n-1}.
\end{equation}
Then, from \eqref{intXpar}, recalling that $u_t$ is harmonic, we have
\begin{equation}\label{priter}
    \begin{split}
         \int_{\partial B_1}\nabla & u_t\cdot \nabla u_t\,d\mathcal{H}^{n-1}=\int_{\partial B_1}\left(\frac{\partial u_t}{\partial\nu}\right)^2\,d\mathcal{H}^{n-1}+\int_{\partial B_1}u_t\frac{\partial^2 u_t}{\partial\nu^2}\,d\mathcal{H}^{n-1}+\int_{\partial B_1}\tilde H_1u_t\frac{\partial u_t}{\partial\nu}\,d\mathcal{H}^{n-1}\\
         &=\int_{\partial B_1}\left[\left(\frac{\partial v}{\partial r}(r,\xi)\bigg|_{r=1}\right)^2-V(\xi) \frac{\partial^2 v}{\partial r^2}(r,\xi)\bigg|_{r=1}-(n-1)V(\xi) \frac{\partial v}{\partial r}(r,\xi)\bigg|_{r=1}\right]\,d\mathcal{H}^{n-1}_\xi\\
         &=\sum_{\ell\geq0}a_\ell^2[\ell^2+\ell(\ell-1)+\ell(n-1)].
    \end{split}
\end{equation}
then, from \eqref{Snesp}-\eqref{secter}-\eqref{priter}, we have
\begin{equation}\label{NewTensTerm}
    \int_{B_1}(S_k^{ij}(D^2u))_tu_{tij}u=\binom{n-2}{k-2}\sum_{\ell\geq 0}a_\ell^2(\ell^2-\ell).
\end{equation}
Substituting \eqref{eq:boundary-contribution} and \eqref{NewTensTerm} into
\eqref{T''(0)}, we obtain
\begin{equation*}
        T_k''(0)\bigg|=C_{n,k}\sum_{\ell\ge0}a_\ell^2 f_{n,k,j}(\ell),
\end{equation*}
where the $j$-th quermassintegral is prescribed, $C_{n,k}>0$ and
\begin{equation*}
    f_{n,k,j}(\ell)=\frac{k-1-j}{n\left(n-1\right)}\ell^{2}-\frac{k+2n+j\left(n-2\right)-3}{n\left(n-1\right)}\ell+\frac{2+j}{n}.
\end{equation*}
The results can be summarised as follows
\begin{itemize}
    \item [($k=1$)] \textbf{Laplacian}
    \begin{itemize}
        \item $f_{n,1,j}(1)=0$, for all $j=0,\dots,n-1$, accordingly with translation invariance;
        \item $f_{n,1,j}(\ell)<0$, for all $j=0,\dots,n-1$, and for all $\ell>1$, accordingly with \eqref{SaintFenchel};
    \end{itemize}
    \item [($1<k<n$)] $\mathbf{k}$\textbf{-Hessian}
    \begin{itemize}
        \item $f_{n,k,j}(1)=0$, for all $j=0,\dots,n-1$, accordingly with the translation invariance;
        \item $f_{n,k,j}(2)>0$ and there exists $\overline{\ell}>2$ such that $f_{n,k,j}(\ell)<0$ for all $j=0,\dots,k-2$, and for all $\ell>\overline{\ell}$, then the ball is a saddle point in this regime;
        \item $f_{n,k,j}(\ell)<0$ for all $j=k-1,\dots,n-1$, and for all $\ell>1$, accordingly with \cite{tso} and \eqref{Aleksandrov_Fenchel_inequalities};
    \end{itemize}
    \item [($k=n$)] \textbf{Monge-Ampère}
    \begin{itemize}
        \item $f_{n,n,0}(1)=f_{n,n,0}(2)=0$, accordingly with the translation and affinity invariance;
        \item $f_{n,n,0}(\ell)>0$ for all $\ell>2$, accordingly with \cite{BNT_newisoperimetric};
        \item $f_{n,n,j}(1)=0$ for all $j=0,\dots,n-1$, accordingly with the translation invariance;
        \item $f_{n,n,j}(2)>0$ and there exists $\overline{\ell}>2$ such that $f_{n,n,j}(\ell)<0$ for all $j=1,\dots,n-2$, and for all $\ell>\overline{\ell}$, then the ball is a saddle point in this regime;
        \item $f_{n,n,n-1}(\ell)<0$ for all $\ell>1$, accordingly with \cite{tso}.
    \end{itemize}
\end{itemize}

This concludes the proof.
\end{proof}

\paragraph{Funding}
The authors were partially supported by Gruppo Nazionale per l’Analisi Matematica, la Probabilità e le loro Applicazioni
(GNAMPA) of Istituto Nazionale di Alta Matematica (INdAM).

Francesco Salerno is partially supported by INdAM GNAMPA 2026 Project ``Processi di diffusione non-lineari: regolarità e classificazione delle soluzioni'', CUP E53C25002010001

Alba Lia Masiello and Gloria Paoli were supported by COST Action 24122 mSPACE, supported by COST (European Cooperation
in Science and Technology), www.cost.e
\paragraph{Competing Interests}
We declare that we have no financial and personal relationships with other people or organizations.
\paragraph{AI statement}
Generative AI tools were used solely to check the manuscript for typographical errors and minor language issues. No generative AI tool was used to generate research ideas, interpret results, or write scientific content.


 \bibliographystyle{plain}
\bibliography{biblio}
\addcontentsline{toc}{chapter}{Bibliography}
\Addresses
\end{document}